\theoremstyle{plain}
\newtheorem{The}{Theorem}
\newtheorem*{The*}{Theorem}
\newtheorem{Pro}{Proposition}
\newtheorem{Lem}{Lemma}
\newtheorem{Cor}{Corollary}
\newtheorem*{Cor*}{Corollary}
\theoremstyle{definition}
\newtheorem{Rem}{Remark}
\newtheorem*{Rem*}{Remark}
\numberwithin{equation}{section}
\DeclareMathOperator{\SL}{SL}
\DeclareMathOperator{\SLt}{SL(2,\mathbb C)}
\DeclareMathOperator{\SU}{SU}
\DeclareMathOperator{\Id}{Id}
\DeclareMathOperator{\vol}{vol}
\DeclareMathOperator{\res}{res}
\DeclareMathOperator{\Eig}{Eig}
\newcommand{\dvector}[1]{{\left(\begin{matrix}#1\end{matrix}\right)}}
\DeclareMathOperator{\dbar}{\bar\partial}
\DeclareMathOperator{\pdeg}{par-deg}
\newcommand{\R}{\mathbb{R}}
\newcommand{\C}{\mathbb{C}}
\newcommand{\Z}{\mathbb{Z}}
\newcommand{\CP}{\mathbb{CP}}
\begin{document}

\title[Abelianization of Fuchsian Systems]{Abelianization of Fuchsian Systems on a $4-$punctured sphere and applications}

\author{Lynn Heller }

\address{ Institut f\"ur Mathematik\\  Universit{\"a}t T\"ubingen\\ Auf der Morgenstelle
10\\ 72076 T\"ubingen\\ Germany
}
  \email{lynn-jing.heller@uni-tuebingen.de}

\author{Sebastian Heller}
\address{ Institut f\"ur Mathematik\\  Universit{\"a}t T\"ubingen\\ Auf der Morgenstelle
10\\ 72076 T\"ubingen\\ Germany
}
\email{heller@mathematik.uni-tuebingen.de}
\subjclass[2010]{Primary 53A05, 53 A 30, 53C42; Secondary 37K15}


\thanks{Both authors were supported by the DFG}

\begin{abstract} 

In this paper we consider special linear Fuchsian systems of rank $2$ on a $4-$punctured sphere and the corresponding parabolic structures. Through an explicit abelianization procedure we construct a $2-$to$-1$ correspondence between 
flat line bundle connections on a torus and these Fuchsian systems. This naturally equips the moduli space of flat 
$\SLt$ connections on a $4-$punctured sphere with a new set of Darboux coordinates. Furthermore, we apply our 
theory to give a complex analytic proof of Witten's formula for the symplectic volume of the moduli space of unitary flat 
connections on the $4-$punctured sphere.

 \end{abstract}
\maketitle


\section{Introduction}
Moduli spaces $\mathcal M$ of flat $G-$connections on a compact Riemann surface $\Sigma$ are 
equipped with interesting geometric structures. Prominent examples beyond the (abelian) line bundle case
 are provided by the special unitary
group and the special linear group: For  $G = \SU(n)$ the moduli space $\mathcal M$ inherits
 a natural  K\"ahler metric and  for $G=\SL(n,\C)$ the moduli space is even
hyper-K\"ahler (see for example \cite{At,Hi1}). This correspondence can be generalized to the case of punctured Riemann surfaces by prescribing the conjugacy classes of the local monodromies, i.e., for suitable boundary conditions on
the objects of interest \cite{Do}.

In the general case where neither $G$ nor the fundamental group $\pi_1(\Sigma)$ are abelian, it is hard to find a unified and explicit description
of the moduli space $\mathcal M$ with all its geometric structures. 
For example, it is not known how to explicitly represent unitary connections on a Riemann surface in a way which makes
its K\"ahler structure visible. Further, it is not possible to 
 to see 
all K\"ahler structures at once in a computable way for $G=\SL(2,\C)$. 
The main reason for this lack of understanding is due to the fact that it is generally not possible to compute the monodromy 
representation of an irreducible connection. Recent progress towards the understanding of the hyper-K\"ahler geometry
of the moduli space of flat $\SLt$ connections was made in \cite{GMN} by an abelianization procedure based on the WKB analysis along
so-called spectral networks. However, this work does not take the underlying holomorphic structures (respectively 
parabolic structures in the presence of punctures) into full account. But this seems to be necessary
for a complete understanding of these moduli spaces and for some applications such as the integrable systems approach to 
harmonic maps (see \cite{He3}).

In this paper we carry out an abelianization procedure for flat $\SLt $ connections on a $4-$punctured sphere
which does not only make the underlying parabolic structures as transparent as possible but also sheds new light on the
K\"ahler structure of the moduli space of flat $\SU(2)$ connections on the $4-$punctured sphere.
The starting point of our theory is the following well-known fact which is a special instance of 
the Riemann Hilbert correspondence (see for example \cite{Dek} for the general treatment of the rank $2$ case on a $n-$punctured sphere): All representations
$\pi_1(\CP^1\setminus\{z_0,..,z_3\},*)\to \SLt$
can be realized as the monodromy representation of a Fuchsian system, i.e., of a meromorphic connection $\nabla$ on the trivial rank two bundle 
$\C^2\to\CP^1$ with first order poles at the singular points $z_0,..,z_3\in\CP^1.$ We are interested in the case where the monodromy representation is unitary up to conjugation.
Since the conjugacy classes of  the monodromies around a puncture (local monodromies) 
are generally determined by the residues of the connections, we restrict ourselves to the case
of trace-free residues with real eigenvalues $\pm\rho_i$ such that 
$\rho_i\in]0,\frac{1}{2}[$ (excluding singular cases). 
The eigenlines with respect to the positive eigenvalues $\rho_i$ of the residues of a Fuchsian system determine flags of $\C^2$ at
the singular points 
 together with weight filtrations induced by the eigenvalues. 
 This gives rise to a parabolic structure associated to a Fuchsian system.
 The notion of stability of parabolic structures can be defined and it turns out that this notion is naturally connected
to the question of unitarizable monodromy (\cite{MS}): For every stable parabolic structure there exists a unique
 compatible Fuchsian system whose (irreducible) monodromy representation is unitary up to conjugation.
In section \ref{sec:fuchs} we give more details on the relationship between Fuchsian systems and 
parabolic structures. In particular, we recall a useful parametrization of Fuchsian systems from \cite{LS} and discuss
stability issues of the corresponding parabolic structures.

In section \ref{sec:abel} we shift our attention to the moduli spaces of parabolic structures and
flat connections and study them
 via abelianization.
The space of special linear Fuchsian systems on a $4-$punctured sphere with prescribed residue eigenvalues $\pm\rho_i$ 
 is a complex two dimensional variety, while the moduli space of (semi-)stable parabolic structures is
a projective line equipped with its natural complex structure \cite{ArLy,LS}. The forgetful map from Fuchsian systems to parabolic structures gives rise to an affine line bundle whose underlying vector bundle consists
of parabolic Higgs fields, i.e., meromorphic $\mathfrak{sl}(2,\C)-$valued $1-$forms with first order poles fixing a given parabolic structure when added to a compatible Fuchsian system.
Generically, the eigenlines of parabolic Higgs fields are only well-defined on a torus given by the double cover of the Riemann sphere branched over the singular points.
The eigenlines determine the parabolic structure and vice versa. This gives rise to a $2-$to$-1$ correspondence
between the Jacobian of the torus and the moduli space of parabolic structures.
This correspondence extends to flat line bundle connections on the one side
and flat $\SLt$ connections on the other 
in the following way (Theorem \ref{two-to-one}):
The eigenlines of the parabolic Higgs field span the rank $2$ bundle away from the branch divisor and the connection gives rise to
meromorphic line bundle connections on the eigenlines with first order poles (and fixed residues $\tfrac{1}{2}$) at the branch 
divisor. Factorizing out the poles, i.e., tensoring with a special flat meromorphic line bundle of degree $2$, yields 
(ordinary) flat line bundles on the torus. Moreover, the second fundamental forms of the flat $\SLt$ connections with 
respect to the line subbundles are uniquely determined by the underlying holomorphic structure of the line bundle
(Proposition \ref{explicit_coeff}).
By choosing Darboux coordinates on the moduli space of flat line bundles over the torus
 we also obtain a new set of Darboux coordinates for the natural holomorphic symplectic structure on the moduli space of 
 flat $\SLt$ connections on
the $4-$punctured sphere with prescribed local monodromies, see Theorem \ref{Darboux_coord}.

In the last section, section \ref{symp_volume}, we apply the results and methods from section \ref{sec:abel} to compute the symplectic 
volume of the moduli space $\mathcal M$ of special unitary connections
on the 4-punctured sphere with prescribed local monodromies. This is a special case of Witten's formula 
\cite{W}. 
We give an alternative
complex analytic proof of this formula: Applying Theorem \ref{Darboux_coord} we can write down an explicit
representative of the cohomology class of the symplectic form on the Jacobian which double covers $\mathcal M$. This $2-$form can be easily integrated over the Jacobian and yields the symplectic volume of 
$\mathcal M.$

\section{Fuchsian Systems}\label{sec:fuchs}
Let $M = \CP^1 \setminus \{z_0,..,z_3\}$ be a $4-$punctured Riemann sphere. 
By applying a Moebius transformation we can always assume that $z_0=[1:0],$ $z_1=[1:1],$ $z_2=[0:1],$  and
$z_3=[m:1]$ for a suitable $m\in\C\setminus\{0,1\}.$
We consider Fuchsian systems on $M$ which are systems of differential equations describing parallel sections of the trivial rank $2$ vector bundle $V= \underline \C^2$ over $M$ given by a meromorphic connection of the form
\begin{equation}\label{Fuchs2}\nabla=d+A_1\frac{dz}{z-1}+A_2\frac{dz}{z}+A_3\frac{dz}{z-m}.\end{equation}
Note that $\nabla$ also has a first order pole at $z=\infty$ with residue $-A_1-A_1-A_3=A_0.$

The Riemann-Hilbert Problem is solved for the $\SLt$ case and gives a correspondence between $\SLt$ representations of the first fundamental group $\pi_1(M,*)$ and trace-free Fuchsian systems.
Unitarizable representations are those representations lying in the $\SLt$ conjugacy classes of $\SU(2)$ representations. A natural question is which Fuchsian systems correspond to unitarizable representations. 
There are necessary conditions (the Biswas conditions \cite{Bis3}) on the eigenvalues of the $A_i$ for a Fuchsian system to have unitarizable monodromy, but these conditions are far from being sufficient.
Nevertheless, it is natural to study 
Fuchsian systems on the 4-punctured sphere $\CP^1\setminus\{z_0,..,z_3\}$ 
with prescribed conjugacy classes of the local monodromies.
In view of the Biswas conditions we assume that the eigenvalues $\pm\rho_i$ of $A_i$ are real and lie the interval $]-\frac{1}{2},\frac{1}{2}[.$
In order to exclude the degenerated cases, we restrict to the case that 
\[-\rho_i<0<\rho_i\]
for $i=0,..,3.$ Clearly, the local monodromies around the singularity $z_i$ lies in the conjugacy class of
\[\dvector{\exp(2\pi i\rho_i) &0\\0 &\exp(-2\pi i\rho_i)},\]
and the choice of the conjugacy class of the local monodromies  is equivalent to the choice of the eigenvalues of the residues $A_i.$

\subsection{Parabolic structures}
A Fuchsian system as in \eqref{Fuchs2} gives rise to a parabolic structure as follows  (for more details see
\cite{MS, Bis3, ArLy} or \cite{Pir}):
The underlying holomorphic vector bundle $V$ of a Fuchsian system is the trivial holomorphic bundle $\C^2\to\CP^1.$ The residue
$A_i$ of the connection $\nabla$ at the singularity $z_i$ give rise to a complex line
\[E_i=\ker(A_i-\rho_i\Id)\]
(where $\rho_i>0$ is the positive eigenvalue)
together with a filtration
\[0\subset E_i\subset V_{z_i}\]
of the fiber of $V$ at $z_i.$ Then the parabolic structure is given by
these filtrations over the singularities together with the corresponding weight filtration $(\rho_i,-\rho_i),$
i.e., the line $E_i$ is equipped with the weight $\rho_i$ while $V_{p_i}\setminus E_i$ is equipped with the weight
$-\rho_i.$
Note that the parabolic degree of $V$
\[\pdeg V = \deg V+\sum_i\sum\text{eigenvalues of } A_i=\sum_i(\rho_i-\rho_i) = 0\]
automatically vanishes in our situation. A holomorphic line subbundle $L\subset V$ is equipped
with the induced parabolic degree
\[\pdeg L=\deg L+\sum_i\gamma_i,\]
where (for $i=0,..,3$) $\gamma_i$ is defined to be $\rho_i$ if $L_{p_i}=E_i$ and $-\rho_i$ otherwise.
The parabolic structure is called stable (respectively semi-stable) if the parabolic degree is negative (respectively non-positive) for all holomorphic line subbundles $L$: $\pdeg L<0,\, (\leq 0).$ By \cite{MS,Biq} and because of the Riemann Hilbert correspondence, every stable
parabolic structure admits a Fuchsian system with unitarizable monodromy representation.
Moreover, up to isomorphisms respectively conjugations, this correspondence between stable parabolic structures
and irreducible unitary monodromy representations on a punctured sphere is $1-$to$-1$. Additionally, reducible unitary
monodromy representations give rise to strictly semi-stable parabolic structures.

In this paper we are interested in the moduli space of Fuchsian systems on the 4-punctured sphere
with prescribed conjugacy classes of the local monodromies. Parabolic stability is an open condition.
Hence, 
 a generic
Fuchsian system with prescribed eigenvalues of the residues
induces a stable parabolic structure if there exists one Fuchsian system with these eigenvalues whose parabolic structure is stable.
In our case a criterion for the stability follows from
\cite{Bis3}: For given $\rho_i,$
there exists a Fuchsian system inducing a stable parabolic structure if and only if
\begin{equation}\label{Biswas_criterion}
1+\rho_{\sigma(3)}>\rho_{\sigma(0)}+\rho_{\sigma(1)}+\rho_{\sigma(2)}>\rho_{\sigma(3)}
\end{equation}
for all permutations $\sigma\in\mathfrak S(\{0,1,2,3\}).$ We will give a short proof of this (in the $4-$puncture case) in section \ref{stability}.

\subsection{Parabolic Higgs fields}
Consider a Fuchsian system $\nabla$ and its induced parabolic structure as above. If we add to $\nabla$ 
a  meromorphic 1-form \[\Psi\in H^{1,0}(\CP^1\setminus\{z_0,..,z_3\},\mathfrak{sl}(2,\C))\]
with first order poles, the induced parabolic structure will change in general.
The condition that $\nabla+\Psi$ has the same parabolic structure as
$\nabla$ is that the eigenlines
$E_i$ of the positive eigenvalues $\rho_i>0$  are in the kernel of the residues of $\Psi$
at the singularities $z_i.$ If this condition is satisfied, $\Psi$ is called a {\em parabolic Higgs field}.

Then we observe:
\begin{Pro}
For a generic special linear Fuchsian system on the 4-punctured sphere, the space of
parabolic Higgs fields is complex 1-dimensional. In general, the determinant of a parabolic Higgs field
is a non-zero meromorphic quadratic differential with first order poles on $\CP^1$, i.e., a constant multiple of
$\frac{(dz)^2}{\Pi_{i=1}^3 (z-z_i)}.$
\end{Pro}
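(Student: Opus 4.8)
The plan is to write a parabolic Higgs field explicitly in the coordinate $z$ and then count constraints. A general $\mathfrak{sl}(2,\C)$-valued meromorphic $1$-form on $\CP^1$ with at most first order poles at $z_1=1$, $z_2=0$, $z_3=m$ (and, a priori, at $z_0=\infty$) has the form
\begin{equation}\label{higgs-ansatz}
\Psi = B_1\frac{dz}{z-1}+B_2\frac{dz}{z}+B_3\frac{dz}{z-m},
\end{equation}
with $B_i\in\mathfrak{sl}(2,\C)$, and the residue at $\infty$ equal to $-(B_1+B_2+B_3)$; requiring at most a first order pole there is automatic. So the space of such forms is $9$-dimensional. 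The parabolic condition imposes, at each of the four punctures, that the prescribed eigenline $E_i$ lie in the kernel of the residue of $\Psi$ at $z_i$. For a trace-free $2\times 2$ matrix, having a prescribed line in its kernel is two linear conditions (the matrix must be of the form $v\otimes\ell$ with $\ell$ annihilating the line, up to scale — concretely, in a basis adapted to $E_i$ the matrix is strictly lower triangular, killing two of the three free parameters). First I would check that these $4\times 2 = 8$ linear conditions are generically independent on the $9$-dimensional space \eqref{higgs-ansatz}; this is the crux of the dimension count and I expect it to be the main obstacle, since one must rule out an unexpected coincidence among the four eigenlines $E_0,E_1,E_2,E_3$. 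For a generic Fuchsian system the four flags are in general position, and then a direct rank computation of the $8\times 9$ coefficient matrix — or the parametrization from \cite{LS} recalled in Section~\ref{sec:fuchs} — shows the conditions are independent, leaving a complex $1$-dimensional space of parabolic Higgs fields.

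Next I would argue that this space is exactly $1$-dimensional and not larger by exhibiting a nonzero element, e.g. by observing that adding a small multiple of a compatible Higgs field to $\nabla$ deforms it within the fiber of the forgetful map to parabolic structures, and by the discussion of that affine bundle in Section~\ref{sec:abel} the fibers are nonempty. (Alternatively, the nonvanishing will follow from the determinant computation below, which would be impossible for $\Psi\equiv 0$.) Combined with the upper bound $\dim\le 1$ from the rank count, genericity gives equality.

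For the second assertion, consider $\det\Psi$, a meromorphic quadratic differential on $\CP^1$. Away from the punctures $\det\Psi$ is holomorphic, and at each $z_i$ the residue $B_i$ of $\Psi$ is nilpotent (it is trace-free with the line $E_i$ in its kernel, hence has a double eigenvalue $0$), so near $z_i$ one has $\Psi = \bigl(\tfrac{B_i}{z-z_i}+\text{hol.}\bigr)dz$ with $B_i^2=0$, which forces $\det\Psi$ to have at most a \emph{first} order pole at $z_i$ rather than the second order pole one would naively expect from squaring $\tfrac{1}{z-z_i}$ — the leading $\tfrac{\det B_i}{(z-z_i)^2}$ term vanishes. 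The same nilpotency of the residue $-(B_1+B_2+B_3)$ at $\infty$ gives at most a first order pole there. A quadratic differential on $\CP^1$ with at most first order poles at $z_1,z_2,z_3,\infty$ and no other poles must be a constant multiple of $\dfrac{(dz)^2}{(z-z_1)(z-z_2)(z-z_3)}$, since the space of such differentials is one-dimensional (degree count: $-2\cdot 2 + 4 = 0$). Finally I would note that for a \emph{generic} Fuchsian system this constant is nonzero: if $\det\Psi$ vanished identically then $\Psi$ would be everywhere nilpotent, i.e. $\Psi=v\otimes\ell$ globally, which is a non-generic degeneration (it would pin all four eigenlines onto a single global section of $V$), contradicting genericity. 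Hence $\det\Psi$ is a nonzero constant multiple of $\dfrac{(dz)^2}{\Pi_{i=1}^3(z-z_i)}$, as claimed.
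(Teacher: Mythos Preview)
Your argument is correct and essentially complete. The paper, however, does not give a separate proof of this proposition: it is stated as an observation, and the justification is implicit in the explicit parametrization (borrowed from \cite{LS}) that follows immediately in Section~\ref{explicit1}. There the Higgs field $\Psi^u$ is written down by hand for each parabolic structure $u$, and one sees directly that the space of parabolic Higgs fields is spanned by $\Psi^u$; the determinant is computed later (Section~\ref{explicit2}) as
\[
\det\Psi^u=u(u-1)(m-u)\,\frac{(dz)^2}{z(z-1)(z-m)},
\]
which is nonzero exactly when $u\notin\{0,1,m\}$.

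Your route---the dimension count ($9$ parameters, $4\times 2$ linear constraints) together with the nilpotency observation forcing $\det\Psi$ to have only first-order poles---is more conceptual and explains \emph{why} the result holds rather than merely verifying it in coordinates. In fact, plugging the eigenlines of \eqref{concrete_u} into your linear system and solving by hand both confirms the rank claim you left open and recovers the paper's $\Psi^u$; so your outline and the paper's parametrization are really two sides of the same coin. One wording quibble: your last sentence on the generic non-vanishing of $\det\Psi$ is imprecise. A nowhere-zero nilpotent $\Psi$ need not be literally $v\otimes\ell$ with \emph{constant} $v,\ell$; what you want is that its kernel defines a line subbundle $L\subset\underline{\C}^2$ with $L_{z_i}=E_i$ at each puncture (where the residue is nonzero), and that a subbundle of degree $0$ or $-1$ meeting four prescribed lines is a cross-ratio condition on $u$---precisely the loci $u\in\{0,1,m,\infty\}$ singled out in the paper.
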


\subsection{Concrete formulas}\label{explicit1}
Throughout this paper, we make use of the 
following explicit parametrization of trace-free Fuchsian systems on a $4-$punctured sphere \cite{LS}.
Let $\rho_i>0$ and let $\rho=\rho_0-\rho_1-\rho_2-\rho_3.$
By introducing a complex parameter $u$ (representing the parabolic structure) we can set
\begin{equation}\label{concrete_parabolic_structure}
\begin{split}
A^u_1&=\dvector{-\rho_1-\rho &2 \rho_1+\rho \\ -\rho & \rho_1+\rho},\, \,\,\,A^u_2=\dvector{-\rho_2  &0 \\ \rho & \rho_2},\\
A^u_3&=\dvector{-\rho_3 &2\rho_3 u \\ 0 & \rho_3} ,\, \,\,\,
A^u_0=-A^u_1-A^u_2-A^u_3=\dvector{\rho_0 &-\rho_0-\rho_1+\rho_2+\rho_3-2\rho_3 u \\ 0 & -\rho_0}.
\end{split}
\end{equation}

Then the connection 
\[\nabla^u:=d+A^u_1\frac{dz}{z-1}+A^u_2\frac{dz}{z}+A^u_3\frac{dz}{z-m}\] is a Fuchsian system with poles at $z_0=\infty,$ $z_1=1,$ $z_2=0$ and $z_3=m$ whose local monodromies are determined by $\pm\rho_0$  $,\pm\rho_1$, $\pm\rho_2$  and $\pm\rho_3,$ respectively. Moreover, for
\begin{equation}\label{concrete_Higgs}
\begin{split}
\Psi_1=\dvector{u&-u \\ u & -u},\, \,\,\,\, \Psi_2&=\dvector{0 &0 \\ 1-u & 0},\\
\Psi_3=\dvector{-u &u^2 \\ -1 & u}, \, \,\,\,\Psi_0&=-\Psi_1-\Psi_2-\Psi_3=\dvector{0 &u-u^2 \\ 0 & 0} 
\end{split}
\end{equation}
the $1-$form
\begin{equation}\label{psi^u}
\Psi^u=\Psi:=\Psi_1\frac{dz}{z-1}+\Psi_2\frac{dz}{z}+\Psi_3\frac{dz}{z-m}
\end{equation}
 is a Higgs field with respect to the induced parabolic structure of 
$\nabla^u.$ 
Thus, a generic monodromy representation of the fundamental group of the $4-$punctured sphere with given local monodromies can be realized 
by a
unique 
\[\nabla^{u,\lambda}:=\nabla^u+\lambda\Psi, \quad\lambda\in \C\] up to conjugation. The eigenlines of the positive eigenvalues $\rho_i>0$  of the residues are
\begin{equation}\label{concrete_u}
\begin{split}
\Eig(\res_{z_0}\nabla^{u,\lambda},\rho_0) &=\C\dvector{1\\0} \,\,\,\, \Eig(\res_{z_1}\nabla^{u,\lambda},\rho_1)=\C\dvector{1\\1} \\
\Eig(\res_{z_2}\nabla^{u,\lambda},\rho_2)&=\C\dvector{0\\1}\,\,\,\,\Eig(\res_{z_3}\nabla^{u,\lambda},\rho_3)=\C\dvector{u\\1},\\
\end{split}
\end{equation}
and the parabolic structure with prescribed parabolic weights $(\rho_0,-\rho_0),$ .. $(\rho_3,-\rho_3)$ is determined
by the cross-ratio of these four lines considered as points in $\C P^1$, i.e., by 
\[Xratio([1:0],[1:1];[0:1],[u:1])=u.\]

\subsection{Stability}\label{stability}
Next, we determine which parabolic structures induced by $\nabla^u$ are stable. For $u\notin\{0,1,\infty\}$ every holomorphic line subbundle $L\subset\underline\C^2$ of degree $0$ meets at most one eigenline and we obtain
\[\pdeg(L)\leq -\rho_{\sigma(0)}-\rho_{\sigma(1)}-\rho_{\sigma(2)}+\rho_{\sigma(3)}\]
for all permutations $\sigma\in\mathfrak S(\{0,1,2,3\}).$ Moreover, equality holds for the trivial line subbundle $L= Eig(\res_{z_{\sigma(3)}}\nabla^{u},\rho_{\sigma(3)}).$
Similarly, for $u\neq m$ every line subbundle $L\subset\underline\C^2$ of degree $-1$ meets at most three eigenlines, and we obtain
\[\pdeg(L)\leq -1-\rho_{\sigma(3)}+\rho_{\sigma(0)}+\rho_{\sigma(1)}+\rho_{\sigma(2)}\]
for all permutations $\sigma\in\mathfrak S(\{0,1,2,3\})$ with equality for a suitable chosen bundle $L.$
For example, for $\sigma=\Id\in\mathfrak S(\{0,1,2,3\})$ $L$ is the tautological line bundle, i.e., its fiber at $[z:1]\in\CP^1$ is given by 
 \[L_{[z:1]}=\C\dvector{z\\ 1}.\]
From our assumption $\rho_i\in]0;\frac{1}{2}[$ we automatically have that $\pdeg L<0$ for all line subbundles $L\subset \underline\C^2$ of degree less or equal to $-2$.   Therefore, for $u\notin\{0,1,m,\infty\},$ stability of the parabolic structure induced by
$\nabla^{u,\lambda}$ is equivalent to the Biswas conditions \eqref{Biswas_criterion}.

For $u\in\{0,1,\infty\},$ there is a unique (trivial) line subbundle $L\subset\underline\C^2$ such that
$L$ meets two eigenlines, and we obtain
\[\pdeg(L)=-\rho_{\sigma(0)}-\rho_{\sigma(1)}+\rho_{\sigma(2)}+\rho_{\sigma(3)}\]
for a suitable $\sigma\in\mathfrak S(\{0,1,2,3\}).$ Thus 
we obtain (under the extra condition $1+\rho_{\sigma(3)} \geq \rho_{\sigma(0)}+\rho_{\sigma(1)}+\rho_{\sigma(2)}$) that:
\begin{itemize}
\item the parabolic structure induced by $\nabla^u$ for $u=0$ is (semi-)stable if and only if 
$\rho_2+\rho_3<(\leq)\,\, \rho_0+\rho_1;$
\item the parabolic structure induced by $\nabla^u$ for $u=1$ is (semi-)stable if and only if
$\rho_1+\rho_3<(\leq)\,\, \rho_0+\rho_2;$
\item the parabolic structure induced by $\nabla^u$ for $u=\infty$ is (semi-)stable if and only if
$\rho_0+\rho_3< (\leq)\,\, \rho_1+\rho_2.$
\end{itemize}

For $u=m$ the tautological line bundle $L$ meets all four eigenlines. Hence, we obtain that
the parabolic structure induced by $u=m$ is (semi-)stable if and only if
\[\rho_0+\rho_1+\rho_2+\rho_3<(\leq) 1.\]

\section{Abelianization of Fuchsian systems}\label{sec:abel}

Let $\nabla$ be a Fuchsian system as in \eqref{Fuchs2} such that the induced parabolic structure is (semi-)stable.
Assume there is a parabolic Higgs field  $\Psi$ with respect to the given parabolic structure such that
\[\det\Psi=\frac{(dz)^2}{z (z-1)(z-m)},\]
where $0,1,\infty,m\in\CP^1$ are the singularities of $\nabla.$ The eigenlines of $\Psi$ 
are well-defined on a double covering of $\CP^1$ branched at $0,1,\infty,m,$ i.e., on a complex torus
$T^2=\C/\Gamma$ of dimension $1.$ Without loss of generality we can assume
$\Gamma=\text{span}{(1,\tau)}$ and we can choose the elliptic involution $\sigma$ with respect to $\pi\colon\C/\Gamma\mapsto\CP^1$ to be 
$[w]\mapsto[-w].$ We can also fix our notations such that the preimage of $z_0$ is $w_0:=[0]\in\C/\Gamma,$ the preimage of $z_1$ is $w_1:=[1/2],$ the preimage of $z_2$ is $w_2:=[1/2+\tau/2],$ and the preimage of $z_3$ is $w_3:=[\tau/2].$ The eigenlines 
$L^\pm$ of $\pi^*\Psi$ have degree $-2$ as they intersect each other with order $1$ at $w_0,...,w_3,$
and because $\sigma ^*L^\pm=L^{\mp}.$
Note that \[L^+\otimes\sigma^*L^+=L^+\otimes L^-=L(-w_0-...-w_3).\] 
Let $S:=L(-2w_0)=...=L(-2w_3).$ Then we have $\sigma^*S=S$ and $\sigma^*S\otimes S=L(-w_0-...-w_3).$
The latter equation holds because there exists a meromorphic function (the derivative of the $\wp-$function) with a pole of order $3$ at $w_0$ and simple zeros at $w_1,w_2,w_3.$
Altogether, we see that for any parabolic Higgs field $\Psi$ with $\det \Psi\neq0$ the eigenlines
$L^\pm$ of $\Psi$ are given by \[L^+=S\otimes E,\,\,\,\,\ L^-=S\otimes E^*\] for a suitable $E\in \text{Jac}(\C/\Gamma).$
Moreover, $E$ is unique up to $E\mapsto E^*.$

Note that there is a unique meromorphic connection $\nabla^s$ on $S^*$ such that
the meromorphic connection $\nabla^s\otimes \nabla^s$ on $(S^*)^2=L(w_0+..+w_3)$ annihilates the holomorphic section 
$s_{w_0+..+w_3}$ with simple zeros at $w_0,..,w_3$. 
After pulling back a Fuchsian system $\nabla$ to $\C^2\to\C/\Gamma,$
$\nabla$ induces a unique meromorphic connection on the direct sum $L^+\oplus L^-$ of the eigenlines of $\Psi$
which makes the inclusion $L^+\oplus L^-\to\C^2$ parallel.
Tensoring the meromorphic connection on $L^+\oplus L^-$ with the flat line bundle $(S^*,\nabla^s)$ yields
a flat meromorphic connection $\hat\nabla$ on
\begin{equation}\label{hat_nabla}
E\oplus E^*\to \C/\Gamma.\end{equation}
The precise form of $\hat\nabla$ will be determined in the subsections \ref{explicit2}, \ref{Residues_of-Nabla} and \ref{2ff}.

\subsection{Concrete formulas II}\label{explicit2}
We first investigate the relationship between parabolic structures (in terms of the parameter $u$) and the holomorphic
eigenline bundles on the torus:
For a parabolic structure induced by $\nabla^u$ the Higgs field $\Psi^u$ in \eqref{psi^u} has determinant
\[\det\Psi^u=u(u-1)(m-u)\frac{(dz)^2}{(z(z-1)(z-m)}.\]
Its eigenlines are defined on the elliptic curve $T^2=\C/\Gamma$ given by the equation
\[y^2=z(z-1)(z-m).\]
The eigenvalues of $\Psi^u$ are 
\[\mp\sqrt{u(u-1)(u-m)}\frac{dz}{y},\]
i.e., constant multiples of the non-vanishing holomorphic differential $\frac{dz}{y}.$ 
The eigenline bundles $L^\pm$ of $\Psi^u$ are given by
\[\C \dvector{(-1+m) u z\mp \sqrt{u (u-1)(u-m)z(z-1)(z-m)}\\ -u z+m(-1+u+z)},\]
and their degree is $-2.$ Moreover, the divisors representing these line bundles are 
\[D^\pm=-3w_0+P^\pm, \]
where $w_0\in T^2$ is the point lying over $z=\infty,$
and $P^+=(z^+,y^+)$ and  $P^-=(z^+,y^-)$ are given with respect to the equation $y^2=z(z-1)(z-m)$ by
\begin{equation}
\label{yz-uv}
z^+=\frac{m-m u}{m-u},\,\,y^\pm=\pm\frac{m(m-1)}{(u-m)^2}v,\end{equation}
where \[v^2=u(u-1)(u-m)\]
is the algebraic equation for the Jacobian $\text{Jac}(T^2)$.

\subsection{Residues of $\hat\nabla$ on $\C/\Gamma$}\label{Residues_of-Nabla}
The following computation determines the residues of the connection $\hat\nabla$ at the points $w_i$  on the bundle $E \oplus E^*$ in \eqref{hat_nabla}:
There exists a local coordinate $w$ on $T^2\to\CP^1$ such that
$w^2=(z-z_i)$ together with a basis of $\C^2$ such that the pull-back of the Higgs field (as a 1-form) expands as
\[\pi^*\Psi=\dvector{o(w)& -2\frac{1}{w}+O(w)\\ -\frac{1}{2}w+o(w^2) &o(w)}dw.\]
Consider the (locally defined) gauge transformation
\[H=\dvector{1 & 1\\ -\frac{w}{2}& \frac{w}{2}}\]
with singularity at $w=0.$
Then,
\[H^{-1}\Psi H=\dvector{dw &0\\0&-dw}+\text{higher order terms}\]
and
\[H^{-1}dH=\dvector{\frac{1}{2} & -\frac{1}{2}\\-\frac{1}{2}&\frac{1}{2}}\frac{dw}{w}.\]

By definition the eigenline of the residue of $\nabla$ with respect to the positive eigenvalue $\rho_i>0$ lies in the kernel of residue of $\Psi,$
i.e., in the above mentioned frame, the pull-back of $\nabla$ is given by
\[\pi^*\nabla=d+\dvector{2\rho_i &0 \\ 0 &-2\rho_i}\frac{dw}{w}+\text{higher order terms}.\]
Applying the gauge transformation $H,$ we obtain
\[\pi^*\nabla.H=H^{-1}\circ\pi^*\nabla\circ H=d+\dvector{\frac{1}{2}& 2\rho_i-\frac{1}{2}\\2\rho_i-\frac{1}{2}&\frac{1}{2}}\frac{dw}{w}+\text{higher order terms}.\]

This computation together with the definition of $\nabla^s$ shows that the induced connection $\hat\nabla$ on $E\oplus E^*$ (as defined in \eqref{hat_nabla})
is given by
\[\hat\nabla=\dvector{\nabla^E &\beta^- \\ \beta^+ &\nabla^{E^*} },\]
where $\nabla^E$ is a smooth and holomorphic connection on $E,$ $\nabla^{E^*}$ is its dual on $E^*,$ 
and  $\beta^\pm$ are meromorphic $1-$forms with values in $E^{\mp2}$ such that $\beta^+ \otimes \beta^{-}$ has quadratic residues
given by
\begin{equation}\label{quadratic_residues}
\text{res}_{w_i}(\beta^+\beta^-)=(2\rho_i-\frac{1}{2})^2.\end{equation}

Altogether, we obtain via this abelianization procedure  a connection on $\C^2\to\C/\Gamma$ which
is (gauge equivalent to)
\begin{equation}\label{abel_connection}
\hat\nabla=\hat\nabla^{\alpha,\xi}=d+\dvector{\alpha dw-\xi d\bar w&\beta^-\\ \beta^+ &-\alpha dw+\xi d\bar w},\end{equation}
where $w$ is the global coordinate on $\C,$ $\alpha,\xi\in\C$ are suitable complex numbers, and
$\beta^\pm=\beta^\pm_\xi$ are meromorphic sections of the holomorphic line bundle given by the holomorphic structure
\[\dbar^\C\pm2\xi d\bar w\] with simple poles at $w_0,..,w_3.$    Using $\vartheta-$functions, we can write down
the second fundamental forms $\beta^\pm$ of $\hat\nabla$ with respect to the decomposition $E\oplus E^*$ explicitly as long as
$L(\dbar-\xi d\bar w)$ is not a spin bundle of $\C/\Gamma$:

\subsection{The second fundamental forms}\label{2ff}
Let $\vartheta$ denote the (shifted) $\vartheta-$function of $\C/ \Gamma$ for $\Gamma=\Z+\Z\tau.$ This means
that $\vartheta$ is 
the unique (up to a multiple constant) entire function satisfying $\vartheta(0) = 0$ and
\[\vartheta(w+ 1) = \vartheta (w),\,\,  \vartheta(w+ \tau) = - \vartheta (w)e^{-2\pi i w}.\]
Then the function \[t_{x}(w) = \frac{\vartheta(w-  x)}{\vartheta(w)}e^{\tfrac{2\pi i }{\bar\tau-\tau} x(w-\bar w)}\]
is doubly periodic on $\C\setminus\Gamma$  with respect to $\Gamma$
and satisfies 
\[(\dbar+\frac{2\pi i}{\bar \tau-\tau}xd\bar w)t_{x}=0.\] Thus $t_x$ is a meromorphic section of the bundle $\underline\C\to\C/\Gamma$ with respect to the holomorphic structure $\dbar+\frac{2\pi i}{\bar \tau-\tau}xd\bar w$
 and has a simple zero in $w=x$ and a first order pole  in $w = 0$ for $x  \notin\Gamma.$ 
\begin{Rem}
Note that the function $t_x$ gives an explicit realization of the two classical points of view on the moduli space of holomorphic line bundles: first, line bundles given by divisors, and second, line bundles given by $\dbar-$operators $\dbar+\frac{2\pi i}{\bar \tau-\tau}xd\bar w$ on
$\underline\C\to\C/\Gamma$ such that the Chern connection (with respect to the trivial metric ) is flat.
In other words, by fixing $[0]\in\C/\Gamma$ we have an identification of the torus $\C/\Gamma$ and
its Jacobian $\overline{H^0(\C/\Gamma, K)}/\Lambda$ with 
\[\Lambda=\{\bar\omega\in\overline{H^0(\C/\Gamma, K)}\mid \int_\gamma (\bar\omega-\omega)\in 2\pi i\Z \text{ for all closed curves } \gamma\}\]
via 
\[[x]\in\C/\Gamma\mapsto L([x]-[0])\cong L(\dbar+\frac{2\pi i}{\bar \tau-\tau}xd\bar w).\]
\end{Rem}

Using the functions $t_x$ we are able to write down the second fundamental forms $\beta^\pm$ explicitly:
\begin{Pro}\label{explicit_coeff}
 Let $x=\frac{\tau-\bar\tau}{2\pi i}\xi$ and assume that 
 $L(\dbar-\xi d\bar w)$ is not a spin bundle.  
 For $i=0,..,3$ set
\[\alpha^\pm_i=\alpha^\pm_i(x):=e^{\pm\frac{4\pi i}{\bar\tau-\tau}x(w_i-\bar w_i)}\frac{\vartheta(w_i\pm x)}{\vartheta(w_i\mp x)}\frac{\vartheta'(0)}{\vartheta(\pm2x)}(2\rho_i-\frac{1}{2}),\]
where $\vartheta'$ is the derivative of $\vartheta$ with respect to $w$
and $w_0=0,$ $w_1=\frac{1}{2},$ $w_2=\frac{1+\tau}{2}$ and $w_3=\frac{\tau}{2}.$ 
 Then the second fundamental forms $\beta^\pm_\xi$ in \eqref{abel_connection} are given by
 the 
 meromorphic $1-$forms
\[\beta^\pm_\xi([w])=\sum_{i=1}^4 \alpha^\pm_i(x)t_{\mp2x}(w-w_i)dw\]
 with values in the holomorphic bundle $L([\mp2x]-[0])=L(\dbar\pm2\xi)$ of degree $0.$
 \end{Pro}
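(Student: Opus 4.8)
The plan is to identify $\beta^\pm_\xi$ as sections of a line bundle of known degree with prescribed poles, and then to pin down the coefficients using the quadratic residue condition \eqref{quadratic_residues} together with the defining transformation laws of $\vartheta$. First I would observe that from the description of $\hat\nabla$ in \eqref{abel_connection} the form $\beta^+_\xi$ is a meromorphic section of $K\otimes L(\dbar+2\xi d\bar w)^{\otimes(-1)}\otimes L(\dbar-2\xi d\bar w)=K\otimes L([-2x]-[0])^{\otimes 2}$ — more precisely, writing out the tensoring with $(S^*,\nabla^s)$ and the identification $L^+=S\otimes E$, $L^-=S\otimes E^*$ from the start of Section~\ref{sec:abel}, the off-diagonal entry $\beta^+$ lives in $\Hom(E,E^*)\otimes K=E^{-2}\otimes K$, which via $E=L([x]-[0])$ and the chosen trivialization becomes the holomorphic bundle $L(\dbar+2\xi d\bar w)$ appearing in the statement. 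Since $K$ is trivial on $\C/\Gamma$ and $\deg L([\mp 2x]-[0])=0$, the bundle has degree $0$, and the residue computation in Section~\ref{Residues_of-Nabla} shows $\beta^\pm$ has at worst simple poles at $w_0,\dots,w_3$ and is holomorphic elsewhere.

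Next I would write down the candidate. A meromorphic section of $L(\dbar\pm2\xi d\bar w)$ with simple poles allowed only at $w_0,\dots,w_3$ is spanned, in the trivialization, by the four functions $t_{\mp2x}(w-w_i)$, $i=0,\dots,3$: indeed $t_{\mp 2x}$ is the canonical meromorphic section with a simple pole at $0$ and a simple zero at $\mp 2x$ relative to the holomorphic structure $\dbar\mp\frac{2\pi i}{\bar\tau-\tau}2x\,d\bar w=\dbar\pm2\xi\,d\bar w$ (using $x=\frac{\tau-\bar\tau}{2\pi i}\xi$), so $t_{\mp 2x}(w-w_i)$ has its pole at $w_i$; shifting the zero from $\mp2x$ to $\mp2x+w_i$ changes the section only by an overall constant, which is absorbed into $\alpha_i^\pm$. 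Thus $\beta^\pm_\xi=\sum_{i}c_i^\pm\,t_{\mp2x}(w-w_i)\,dw$ for some constants $c_i^\pm$, and it remains to show $c_i^\pm=\alpha_i^\pm(x)$. For this I would compute $\res_{w_i}(\beta^+_\xi\beta^-_\xi)$: near $w_i$ only the $i$-th summand of each factor is singular, so the residue of the product equals $c_i^+c_i^-$ times the residue at $w_i$ of $t_{-2x}(w-w_i)\,t_{2x}(w-w_i)$. A short $\vartheta$-function computation — using $\vartheta(0)=0$, $\vartheta'(0)\neq0$, and the definition of $t_x$ — shows this residue equals $\bigl(\vartheta'(0)/\vartheta(2x)\bigr)\bigl(\vartheta'(0)/\vartheta(-2x)\bigr)$ up to the exponential prefactors, so that $c_i^+c_i^-$ is forced to equal $\alpha_i^+(x)\alpha_i^-(x)$ by \eqref{quadratic_residues} and the explicit form $\bigl(\vartheta(w_i+x)/\vartheta(w_i-x)\bigr)\bigl(\vartheta(w_i-x)/\vartheta(w_i+x)\bigr)=1$. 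To separate $c_i^+$ from $c_i^-$ I would use the symmetry $\sigma^*$: the elliptic involution $[w]\mapsto[-w]$ exchanges $E$ and $E^*$, hence exchanges $\beta^+$ and $\beta^-$, and the $w_i$ are precisely the fixed points $\tfrac12\Gamma/\Gamma$; comparing $\sigma^*\bigl(t_{-2x}(w-w_i)dw\bigr)$ with $t_{2x}(w-w_i)dw$ via the transformation law of $\vartheta$ produces exactly the ratio $e^{\pm\frac{4\pi i}{\bar\tau-\tau}x(w_i-\bar w_i)}\vartheta(w_i\pm x)/\vartheta(w_i\mp x)$, which distributes the symmetric product $c_i^+c_i^-$ into the asymmetric factors $\alpha_i^\pm$.

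The assumption that $L(\dbar-\xi d\bar w)$ is not a spin bundle enters at two points and is the main thing to keep track of: it guarantees $2x\notin\Gamma$, so that $t_{\pm2x}$ is genuinely meromorphic (not the zero section or ill-defined) and $\vartheta(\pm2x)\neq0$, making the $\alpha_i^\pm$ well-defined; and it ensures $E^2=L([2x]-[0])$ is nontrivial, so the space of such $\beta^+$ is exactly $4$-dimensional with the $t_{-2x}(w-w_i)$ a basis, rather than there being extra sections or relations. I expect the main obstacle to be bookkeeping rather than conceptual: getting all the exponential automorphy factors and the half-period values $w_i$ to combine correctly, and in particular verifying that the naive "diagonal" product $c_i^+c_i^-$ captures the full residue of $\beta^+\beta^-$ — one must check that cross terms $t_{-2x}(w-w_i)t_{2x}(w-w_j)$ with $i\neq j$, while possibly singular at $w_i$ or $w_j$, do not contribute because each such term has at most a simple pole and the other factor vanishes there, so the product is regular. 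Once the residue identity \eqref{quadratic_residues} and the $\sigma$-symmetry are in hand, the formula for $\beta^\pm_\xi$ follows by matching the finitely many constants.
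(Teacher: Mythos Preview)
Your strategy has a genuine gap: the quadratic residue condition \eqref{quadratic_residues} together with the $\sigma$-symmetry determine the coefficients $c_i^\pm$ only up to an independent sign for each $i$. The quadratic residue fixes the product $c_i^+c_i^-$, and the $\sigma$-symmetry (once an isomorphism $\sigma^*E\cong E^*$ is chosen) fixes the ratio $c_i^-/c_i^+$; together these force only $(c_i^+)^2$, not $c_i^+$ itself. The four resulting sign ambiguities are independent, and the diagonal gauge freedom $\beta^+\mapsto\lambda^{-2}\beta^+$, $\beta^-\mapsto\lambda^2\beta^-$ absorbs at most a global rescaling, so at least $2^3$ gauge-inequivalent candidates for $(\beta^+,\beta^-)$ survive your constraints. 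To pin down the signs you need the \emph{linear} residues of $\beta^\pm$ separately, and that is precisely what the paper's proof does: it writes explicit meromorphic sections $s^\pm$ of the eigenline bundles $L^\pm$ of $\Psi^u$, observes that in this meromorphic frame the residue of $\pi^*\nabla^u$ at $w_i$ has off-diagonal entries equal to $2\rho_i-\tfrac12$ (not its square, cf.\ \eqref{res_matrix}), and then passes to the smooth frame $(\tilde s^+,\tilde s^-)=(\tfrac{1}{t_x}s_{2w_0}\otimes s_{-3w_0+[x]},\,\tfrac{1}{t_{-x}}s_{2w_0}\otimes s_{-3w_0+[-x]})$. The change of frame multiplies the lower-left residue entry by the ratio $t_{-x}(w_i)/t_x(w_i)$, which is exactly the source of the factor $e^{\pm\frac{4\pi i}{\bar\tau-\tau}x(w_i-\bar w_i)}\,\vartheta(w_i\pm x)/\vartheta(w_i\mp x)$ in $\alpha_i^\pm$. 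In short, the paper's argument is a direct single-residue computation in a specific global frame, not a product-plus-symmetry argument, and this is what resolves the signs that your approach leaves undetermined.

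A minor slip in your cross-term discussion: for $i\neq j$ the factor $t_{2x}(w-w_j)$ does \emph{not} vanish at $w_i$---it is merely regular and generically nonzero there---so the product $t_{-2x}(w-w_i)\,t_{2x}(w-w_j)$ does have a simple pole at $w_i$, contrary to your claim that it is regular. Your conclusion survives only because the ``quadratic residue'' in \eqref{quadratic_residues} refers to the coefficient of $(w-w_i)^{-2}$, to which simple poles contribute nothing.
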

  \begin{proof}
 The space $H$
 of meromorphic sections $\beta$ in $L^{\pm2}\otimes K$ with first order poles at $w_0,..,w_3$ is
 $4-$dimensional. If  $L=L(\dbar-\xi d\bar w)$ is not a spin bundle, the residue map
 \[H\to\C^4; \beta\mapsto (\res_{w_0}\beta,..,\res_{w_3}\beta)\]
 is an isomorphism. Therefore, the second fundamental forms $\beta^\pm$ are uniquely determined by their residues and 
 $\xi$.
 In order to determine the residues we are using the setup of section \ref{explicit2} and consider the meromorphic sections
\[s^\pm=\dvector{(-1+m) u z\mp \sqrt{u (u-1)(u-m)z(z-1)(z-m)}\\ -u z+m(-1+u+z)}\]
of the eigenline bundles $L^\pm$ of the parabolic Higgs field $\Psi^u$.
Recall that the divisors are given by
\[(s^\pm)=D^\pm=-3w_0+P^\pm, \]
where $w_0\in T^2$ is the point lying over $z=\infty$ and
$P^\pm=[\pm x]\in T^2=\C/\Gamma$ for a suitable $x\in\C.$ 
Computing the $\mathfrak{gl}(2 , \C)-$valued connection $1-$form of the Fuchsian system
$\nabla^u$ (or more generally $\nabla^{u,\lambda}$) with respect to the meromorphic frame $(s^+,s^-)$ 
and enables us to determine its residues at the preimages $[w_i]$ of the branch points $z_i:$
\begin{equation}\label{res_matrix}
\text{Res}_{[w_i]} \pi^* \nabla = \dvector{\frac{1}{2} & 2\rho_i- \frac{1}{2}\\ 2\rho_i- \frac{1}{2} &\frac{1}{2}}.\end{equation}
The connection $1-$form in \eqref{abel_connection} is then obtained by tensoring with the flat meromorphic line bundle connection $\nabla^S$ and by using the smooth frame 
\[(\tilde s^+,\tilde s^-)=( \frac{1}{t_x}s_{2w_0}\otimes s_{-3w_0+[x]},\frac{1}{t_{-x}}s_{2w_0}\otimes s_{-3w_0+[-x]}),\]
where the function $t_x$ is as in section \ref{2ff},
instead of the frame \[(s^+,s^-)=(s_{-3w_0+[x]},s_{-3w_0+[-x]}).\] 
 This implies that the lower left entry of the
residue matrix at $w_i$ of the connection 1-form with respect to $(\tilde s^+,\tilde s^-)$ is obtained from the lower left entry of the
residue matrix \eqref{res_matrix} at $w_i$ of the connection $1-$form with respect to $(s^+,s^-)$
by multiplying it with $\frac{t_{-x}(w_i)}{t_{x}(w_i)}.$ 
This observation together with a straight forward
computation imply the assertion.
\end{proof}

\begin{Rem} Note that this formula is in accordance
 with the formula for $\beta^\pm$ in \S{4} of \cite{He3} for the symmetric case where all local conjugacy classes are the same, i.e., $\rho_0=...=\rho_3.$  Furthermore,
in the case of $\rho_0=..=\rho_3=\frac{1}{4}$ one obtains abelian $\SLt$ connections (without singularities) on the 
torus $\C/\Gamma.$ This observation fits nicely with \S{6} of \cite{Gold}.
\end{Rem}

\subsection{Flat $\SLt$ connections on the $4-$punctured sphere in terms of flat line bundle connections on a torus}

We have seen in the previous section that flat line bundle connections on a torus uniquely determine (gauge equivalence classes of) flat $\SLt$ connections on the $4-$punctured 
sphere. This implies the following theorem:
\begin{The}\label{two-to-one}
Let $T^2\to\CP^1$ be the elliptic curve which is given by a double cover of the projective line
branched over $0,1,\infty,m\in\CP^1.$ Then \eqref{abel_connection}
gives rise to a $2-$to$-1$ correspondence between an open dense subset of the moduli space of flat
line bundles on $T^2$
and an open dense subset of the moduli space of  flat $\SLt$ connections on $\CP^1\setminus\{0,1,\infty,m\}$ whose local monodromies lie in the conjugacy classes prescribed by $\rho_i>0.$

This correspondence fails to exist exactly for the holomorphic spin bundles on $T^2$ respectively for
those flat $\SLt$ connections whose induced parabolic structure does not admit a
parabolic Higgs field with non-zero determinant.
\end{The}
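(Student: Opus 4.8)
The plan is to construct the $2$-to-$1$ correspondence by assembling the pieces developed in the previous subsections into two mutually inverse constructions (up to the $E\mapsto E^*$ ambiguity). In one direction, start with a flat $\SLt$-connection $\nabla$ on $\CP^1\setminus\{0,1,\infty,m\}$ with local monodromies in the classes prescribed by the $\rho_i$. By the Riemann--Hilbert correspondence and the parametrization \eqref{concrete_parabolic_structure}--\eqref{psi^u}, it can be realized (up to conjugation) as $\nabla^{u,\lambda}=\nabla^u+\lambda\Psi^u$ for a unique $(u,\lambda)$, and the induced parabolic structure is (semi-)stable for generic $u$. Assuming this parabolic structure admits a parabolic Higgs field $\Psi$ with $\det\Psi\neq 0$ (which holds on an open dense set, and fails exactly on the excluded locus), we pull back to the torus $T^2=\C/\Gamma$, split $\pi^*V=L^+\oplus L^-$ into the eigenlines of $\pi^*\Psi$, tensor with $(S^*,\nabla^s)$, and arrive at the flat connection $\hat\nabla$ on $E\oplus E^*$ in the normal form \eqref{abel_connection}; the holomorphic structure $\dbar^{\C}-\xi\,d\bar w$ together with $\alpha$ records a flat line bundle $(E,\nabla^E)$ on $T^2$. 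By Proposition \ref{explicit_coeff}, the off-diagonal terms $\beta^\pm$ are then uniquely determined by $\xi$, so the whole connection $\hat\nabla$ is determined by $(\alpha,\xi)$, i.e. by the point of the moduli space of flat line bundles on $T^2$. The $E\mapsto E^*$ ambiguity (equivalently, the interchange of $L^+$ and $L^-$, or the choice of sign of the square root / the elliptic involution $\sigma$) is exactly the source of the $2$-to-$1$ nature, and it is a genuine ambiguity because $\sigma^*L^\pm=L^\mp$ so swapping $E\leftrightarrow E^*$ produces a gauge-equivalent $\SLt$-connection downstairs.

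For the reverse direction, start with a flat line bundle $(E,\nabla^E)$ on $T^2$ in an open dense set, write $\nabla^E$ in the form $d+\alpha\,dw-\xi\,d\bar w$, set $L^\pm=S\otimes E^{\pm 1}$, and use Proposition \ref{explicit_coeff} to define $\beta^\pm$ via the $\theta$-function formulas; this produces $\hat\nabla^{\alpha,\xi}$ as in \eqref{abel_connection}. One checks that $\hat\nabla^{\alpha,\xi}$ is $\sigma$-equivariant in the appropriate twisted sense (the residue data \eqref{quadratic_residues} together with the equivariance of $L^\pm$ under $\sigma$), so that after untwisting by $\nabla^S$ it descends to a meromorphic $\SLt$-connection on $V=\underline\C^2\to\CP^1$ with first-order poles. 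The residue computation of subsection \ref{Residues_of-Nabla}, read backwards, shows that the eigenvalues of the residues at $z_i$ are $\pm\rho_i$ (from the relation $\operatorname{res}(\beta^+\beta^-)=(2\rho_i-\tfrac12)^2$ and the diagonal entry $\tfrac12$, un-gauging by $H$ recovers $\pm 2\rho_i$ upstairs, hence $\pm\rho_i$ downstairs), so the resulting Fuchsian system lies in the prescribed family; flatness of $\hat\nabla$ descends to flatness of the resulting connection. Finally, the two constructions are mutually inverse up to $E\mapsto E^*$: this follows because the eigenlines of the canonically associated parabolic Higgs field recover $L^\pm$, hence $E$ up to dualization, which is precisely the content surrounding equation \eqref{yz-uv} and the identification in subsection \ref{explicit2}.

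The remaining point is to pin down the exceptional locus precisely and to verify that the correspondence is a bijection (up to the involution) on the complements. For the spin bundle exclusion: Proposition \ref{explicit_coeff} requires $L(\dbar-\xi\,d\bar w)$ not to be a spin bundle, and one sees that when $E^2=L(-w_0-\dots-w_3)$, i.e. $L^+\cong L^-$, the eigenline decomposition of $\pi^*\Psi$ degenerates (or $\vartheta(\pm 2x)=0$ in the formulas, making the residue map $H\to\C^4$ fail to be an isomorphism), so the abelianization genuinely breaks down; there are finitely many (four) such spin bundles. On the $\SLt$ side, the dual failure is exactly that the parabolic structure of $\nabla^{u,\lambda}$ admits no parabolic Higgs field with non-zero determinant — by the Proposition in subsection on parabolic Higgs fields this is a non-generic, lower-dimensional condition (it happens at $u\in\{0,1,m,\infty\}$ and possibly at a few other special configurations), hence removing it still leaves an open dense set. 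To conclude, one checks dimensions: the moduli space of flat line bundles on $T^2$ is complex $2$-dimensional (one complex parameter for the holomorphic line bundle, one for the connection, matching $(\alpha,\xi)$ up to the appropriate lattice), and the moduli space of flat $\SLt$-connections with fixed conjugacy classes on the $4$-punctured sphere is also complex $2$-dimensional (parametrized by $(u,\lambda)$); the map is algebraic and generically $2$-to-$1$, and the constructions above exhibit an explicit inverse on dense opens. The main obstacle I expect is not any single computation but the bookkeeping of the twisted $\sigma$-equivariance in the reverse direction — showing that the $\theta$-function data of Proposition \ref{explicit_coeff} assembles into something that actually descends to a single-valued meromorphic $\SLt$-connection on $\CP^1$ with exactly the right residue conjugacy classes, rather than only to a connection on the torus — and, relatedly, checking carefully that the two-fold ambiguity is exactly $E\mapsto E^*$ and nothing coarser or finer.
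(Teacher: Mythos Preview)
Your proposal is correct and follows essentially the same approach as the paper. In fact, the paper does not give a separate proof block for this theorem at all: it simply states that ``we have seen in the previous section that flat line bundle connections on a torus uniquely determine (gauge equivalence classes of) flat $\SLt$-connections on the $4$-punctured sphere,'' and then records the theorem as a summary of the constructions in \S\ref{explicit2}--\ref{2ff}. Your write-up spells out those constructions in both directions, identifies the $2$-to-$1$ ambiguity with $E\mapsto E^*$ exactly as the paper does, and locates the exceptional locus (spin bundles on one side, $u\in\{0,1,m,\infty\}$ on the other) in agreement with \S\ref{explicit2} and Proposition~\ref{explicit_coeff}. The only imprecision is your hedge ``possibly at a few other special configurations'': since $\det\Psi^u=u(u-1)(m-u)\frac{(dz)^2}{z(z-1)(z-m)}$, the failure occurs at exactly the four values $u\in\{0,1,m,\infty\}$, matching the four spin bundles.
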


Assume that the positive numbers $\rho_0,..,\rho_3$ satisfy the Biswas conditions.
Then a generic parabolic structure on the $4-$punctured sphere with parabolic weights
determined by $\rho_0,..,\rho_2$ and $\rho_3$ is stable.

We are going to extend theorem \ref{two-to-one} to the remaining stable parabolic structures. 
From section \ref{explicit2} we see that there are at most four stable parabolic structures which do not 
admit
a parabolic Higgs field with non-zero determinant. 
In terms
of the parameter $u$ of section \ref{explicit1} these parabolic structures are given by $u\in\{0,1,m,\infty\}.$ 
Further, \eqref{yz-uv} implies that the parabolic structures determined by $u\in\{0,1,m,\infty\}$
 correspond to the four spin bundles
on $T^2$ by choosing $S=L(-2w_0)$ as a base point in $Pic_{-2}(T^2).$

Before stating and proving the extension of the $2-$to$-1$ correspondence of theorem \ref{two-to-one} to the spin bundles  we first give a vague explanation of how to deal with the exceptional cases in theorem \ref{two-to-one}:
 If one takes a careful look at proposition 
\ref{explicit_coeff},
one sees that the second fundamental forms $\beta^\pm_\xi$ have a first order pole (in $\xi$) at the spin bundles 
$L(\dbar-\xi d\bar w).$ Hence, classical asymptotic analysis of ordinary differential equations (see for example \cite{Was})
indicate that the
complex linear part $\partial+\alpha dw$ of the line bundle connection
$d+\alpha dw-\xi d\bar w$
needs also to have a first order pole (in $\xi$) at the spin bundles $L(\dbar-\xi d\bar w):$
If $\alpha(\xi)$ is a (meromorphic) family such that the corresponding
$\SLt$ connections on the $4-$punctured $\CP^1$ extend through the spin bundles, then $\alpha$ must have first order poles and its residues can be computed to be
\begin{equation}\label{res_without_sign}
\begin{split}
&\pm(\hat\rho_0+\hat\rho_1+\hat\rho_2+\hat\rho_3) \frac{\pi i}{\tau-\bar\tau}  \hspace{2.25cm}\text{at } \xi=0\mod \Lambda (\Leftrightarrow u=m), \\
&\pm(\hat\rho_0+\hat\rho_1-\hat\rho_2-\hat\rho_3)\frac{\pi i}{\tau-\bar\tau}   \hspace{2.25cm}\text{at } \xi=\frac{\pi i\tau}{\tau-\bar \tau}\mod \Lambda (\Leftrightarrow u=\infty,) \\
&\pm(\hat\rho_0-\hat\rho_1+\hat\rho_2-\hat\rho_3) \frac{\pi i}{\tau-\bar\tau}  \hspace{2.25cm}\text{at } \xi=\frac{\pi i(1+\tau)}{\tau-\bar \tau}\mod \Lambda (\Leftrightarrow u=1), \\
&\pm(\hat\rho_0-\hat\rho_1-\hat\rho_2+\hat\rho_3)\frac{\pi i}{\tau-\bar\tau}   \hspace{2.25cm}\text{at } \xi=\frac{\pi i}{\tau-\bar \tau}\mod \Lambda (\Leftrightarrow u=0),\\
\end{split}
 \end{equation}
 where $\hat\rho_i=2\rho_i-\frac{1}{2}.$
 The following theorem rigorously proves \eqref{res_without_sign}. Moreover, it also determines
 which of the signs in \eqref{res_without_sign} does induce a stable parabolic structure and which does not.

\begin{The}\label{two-to-one_two}
The $2-$to$-1$ correspondence in theorem \ref{two-to-one} extends to the spin bundles $L(\dbar-\gamma d\bar w)$ (where $\gamma\in\frac{1}{2}\Lambda$)
and to the remaining flat $\SLt$ connections as follows:
Consider a meromorphic family of flat line bundle connections \[\nabla^\xi=d+\alpha(\xi)dw-\xi d\bar w\]
on an open neighborhood  of $\gamma\in\frac{1}{2}\Lambda$ and its
induced family of flat 
$\SLt$ connections on $\CP^1\setminus\{0,1,\infty,m\}.$ 
Then, the gauge orbits of the $\SLt$ connections on the 4-
punctured sphere converge for $\xi\rightarrow \gamma\in\frac{1}{2}\Lambda$ against the gauge orbit of a Fuchsian 
system
with (semi)-stable parabolic structure 
if and only if $\alpha$ expands around $\xi=\gamma$ as
\begin{equation}\label{a_spin_expansion}
\alpha(\xi)\sim_\gamma\frac{2\pi i}{\tau-\bar\tau}\frac{\mu_\gamma}{\xi-\gamma}+\bar\gamma+\,\text{ higher order terms in } \xi,\end{equation}
where
\[\mu_\gamma=\left\{\begin{array}{cl} |1-\rho_0-\rho_1-\rho_2-\rho_3| & \mbox{if }\gamma\in\Lambda\\ 
|\rho_0+\rho_1-\rho_2-\rho_3| & \mbox{if } \gamma\in\frac{\pi i}{\tau-\bar\tau}+\Lambda\\
|\rho_0+\rho_2-\rho_1-\rho_3| & \mbox{if } \gamma\in\frac{\pi i(1+\tau)}{\tau-\bar\tau}+\Lambda\\
|\rho_0-\rho_1-\rho_2+\rho_3| & \mbox{if } \gamma\in\frac{\pi i\tau}{\tau-\bar\tau}\Lambda\\
\end{array}\right..
\]
\end{The}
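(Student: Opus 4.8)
The plan is to reduce the statement to a local analysis at each branch point combined with the determinant constraint coming from the requirement that the limiting $\SLt$-connection on the $4$-punctured sphere is genuinely Fuchsian (i.e.\ has only first order poles with prescribed residue eigenvalues). First I would fix $\gamma\in\frac12\Lambda$, say $\gamma\in\Lambda$ corresponding to $u=m$ via \eqref{yz-uv}, and write the abelianized connection $\hat\nabla^{\alpha,\xi}$ from \eqref{abel_connection} in the meromorphic frame $(\tilde s^+,\tilde s^-)$ used in the proof of Proposition \ref{explicit_coeff}. The key structural input is that $\det\hat\nabla^{\alpha,\xi}$, read off from \eqref{abel_connection}, is $-(\alpha dw-\xi d\bar w)^2-\beta^+\beta^-$; since the resulting $\SLt$-connection on $\CP^1\setminus\{0,1,\infty,m\}$ must stay Fuchsian in the limit, the apparent singularities created by the frame degeneration must cancel, which forces the residue matrix at each $w_i$ to have vanishing determinant. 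Combined with \eqref{quadratic_residues}, $\res_{w_i}(\beta^+\beta^-)=\hat\rho_i^2$, this pins down $(\res_{w_i}\alpha\,dw)^2=-\hat\rho_i^2$ up to sign, where $\hat\rho_i=2\rho_i-\tfrac12$.

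Next I would analyze, via Proposition \ref{explicit_coeff}, exactly how $\beta^\pm_\xi$ blows up as $\xi\to\gamma$. By the formula $\beta^\pm_\xi=\sum_i\alpha^\pm_i(x)t_{\mp2x}(w-w_i)\,dw$ with $x=\frac{\tau-\bar\tau}{2\pi i}\xi$, the prefactor $\frac{\vartheta'(0)}{\vartheta(\pm2x)}$ has a simple pole precisely when $2x\in\Gamma$, i.e.\ at the spin bundles; computing the leading coefficient of $\beta^\pm$ in $(\xi-\gamma)$ at that pole (using $\vartheta'(0)\neq0$ and the quasi-periodicity relations for $\vartheta$) gives that $\beta^+\beta^-$ has a double pole in $\xi-\gamma$ whose leading coefficient is, up to the explicit exponential factors $e^{\pm\frac{4\pi i}{\bar\tau-\tau}x(w_i-\bar w_i)}$, a perfect square of a $\Z_2$-theta-characteristic sum of the $\hat\rho_i$ — this is exactly the content of the heuristic list \eqref{res_without_sign}, with $\hat\rho_i+\tfrac12=2\rho_i$ and the case-by-case signs of $w_i$ determining which combination $\pm\hat\rho_0\pm\hat\rho_1\pm\hat\rho_2\pm\hat\rho_3$ appears. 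Then, to keep $\det\hat\nabla^{\alpha,\xi}$ (hence the Fuchsian connection on $\CP^1$ minus four points) regular as $\xi\to\gamma$, the term $(\alpha dw-\xi d\bar w)^2$ must develop a matching double pole in $\xi-\gamma$; since $\xi d\bar w$ stays bounded, $\alpha$ must have a simple pole with residue whose square cancels the leading $\beta^+\beta^-$ term, giving the leading term $\frac{2\pi i}{\tau-\bar\tau}\frac{\mu_\gamma}{\xi-\gamma}$ in \eqref{a_spin_expansion}, with $\mu_\gamma$ equal to the corresponding $|{\pm}\hat\rho_0\pm\hat\rho_1\pm\hat\rho_2\pm\hat\rho_3+\text{const}|$, which upon simplification is the stated $|1-\rho_0-\rho_1-\rho_2-\rho_3|$ etc. The constant term $\bar\gamma$ in \eqref{a_spin_expansion} is forced because for the Chern connection of the limiting holomorphic structure $\dbar-\gamma d\bar w$ (the spin bundle) to be flat one needs the $(1,0)$-part to be $\bar\gamma\,dw$, exactly as in the Remark preceding Proposition \ref{explicit_coeff}.

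The sign question — which of $\pm\mu_\gamma$ yields a \emph{stable} parabolic structure — I would settle by matching with the stability analysis of section \ref{stability}. There it is shown that at $u\in\{0,1,m,\infty\}$ the destabilizing line subbundle is a specific tautological or trivial bundle whose parabolic degree equals $\pm\big({-}\rho_{\sigma(0)}-\rho_{\sigma(1)}+\rho_{\sigma(2)}+\rho_{\sigma(3)}\big)$ or $-1+\rho_{\sigma(0)}+\rho_{\sigma(1)}+\rho_{\sigma(2)}-\rho_{\sigma(3)}$; the limiting Fuchsian system is (semi-)stable iff the corresponding inequality from section \ref{stability} holds, and tracing through the abelianization one sees the sign of $\res_{\xi=\gamma}\alpha$ is precisely the sign that makes this parabolic degree negative, i.e.\ $\mu_\gamma=|\,\cdot\,|$ with the absolute value selecting the stable branch (and the semistable borderline corresponding to equality). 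I expect the main obstacle to be the last point: rigorously connecting the sign of the residue of $\alpha$ to parabolic stability, because this requires identifying the limiting holomorphic subbundle of $\C^2\to\CP^1$ from the degenerating eigenline decomposition $E\oplus E^*$ on the torus and tracking its parabolic degree — essentially a careful bookkeeping of how the divisor $D^\pm=-3w_0+P^\pm$ of section \ref{explicit2} collides into a spin structure as $u\to\{0,1,m,\infty\}$, together with a WKB/asymptotic argument (in the spirit of \cite{Was}) showing the subbundle selected in the limit is the one of maximal parabolic degree. The theta-function computations, while lengthy, are routine once the structure above is in place; I would relegate them to verifying the explicit coefficients and the precise exponential factors.
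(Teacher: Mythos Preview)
Your approach differs substantially from the paper's. The paper does not argue via a determinant constraint or via the $\xi$-asymptotics of $\beta^\pm_\xi$ from Proposition~\ref{explicit_coeff}; that line of reasoning appears only in the heuristic paragraph \emph{preceding} the theorem, where~\eqref{res_without_sign} is motivated up to sign. The actual proof works entirely on the Fuchsian side: it takes the explicit family $\nabla^{u,\lambda}$ from \S\ref{explicit1}, writes its connection $1$-form in the meromorphic eigenline frame $(s^+,s^-)$, and expands the upper-left diagonal entry directly around $u\in\{0,1,m\}$ in the parameter $v$ (equation~\eqref{res_with_sign}). It then uses the Weierstrass $\wp$-function and the relations~\eqref{yz-uv}, \eqref{yz_P-rel} to convert this into an expansion in $x$, hence in $\xi$ (equation~\eqref{res_with_sign2}). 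This produces the residue \emph{with its sign} in one stroke, so no separate WKB or apparent-singularity argument is needed; the absolute value in the statement enters only through the stability dichotomy of \S\ref{stability}, handled case by case by passing to different coordinates when the parametrization $\nabla^u$ is not semi-stable at the given $u$. The constant term $\bar\gamma$ is obtained by the same direct computation in the frame $(\tilde s^+,\tilde s^-)$, after observing that the Higgs field is diagonal in $(s^+,s^-)$ with eigenvalues $\pm v\,\frac{dz}{y}$ and therefore affects only the higher order terms of the expansion.

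Your proposal, as written, has genuine gaps. First, your opening paragraph conflates residues at the spatial points $w_i\in T^2$ with residues at the moduli point $\gamma$ in the Jacobian: the assertion ``$(\res_{w_i}\alpha\,dw)^2=-\hat\rho_i^2$'' is meaningless, since $\alpha\,dw$ is a holomorphic $1$-form on $T^2$ with no poles at $w_i$, and equation~\eqref{quadratic_residues} (a spatial residue) plays no role in the $(\xi-\gamma)$-expansion you actually need. Second, the ``determinant constraint'' in your second paragraph is only heuristic: the quantity $-(\alpha\,dw-\xi\,d\bar w)^2-\beta^+\beta^-$ is \emph{not} gauge-invariant, so its regularity as $\xi\to\gamma$ does not by itself characterize convergence of the gauge orbit on the $4$-punctured sphere. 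To make this rigorous you would have to track a genuine gauge invariant --- the image $(u,\lambda)$ under the inverse abelianization, or monodromy traces --- through the degenerating eigenline decomposition, at which point you are essentially redoing the paper's direct computation. Finally, your argument for the constant term $\bar\gamma$ via the Chern connection is unjustified: for each holomorphic structure $\dbar-\gamma\,d\bar w$ there is a full affine line of flat connections $d+c\,dw-\gamma\,d\bar w$, and nothing in your sketch singles out $c=\bar\gamma$ as the one corresponding to the limiting Fuchsian orbit.
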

\begin{proof}
As in the proof of proposition \ref{explicit_coeff} we
consider the meromorphic sections
\[s^\pm=\dvector{(-1+m) u z\mp  v y\\ -u z+m(-1+u+z)}\]
of the eigenline bundles $L^\pm$ of the Higgs field $\Psi^u$ with respect to the parabolic structure induced by $\nabla^u.$
Recall that 
\[y^2=z(z-1)(z-m)\]
and
\[v^2=u(u-1)(u-m)\]
are the algebraic equations for the torus $T^2\to \CP^1$ and its Jacobian,  respectively.
The divisors of the sections $s^\pm$ are given by
\[D^\pm=-3w_0+P^\pm, \]
where $w_0\in T^2$ is the point lying over $z=\infty$ and
$P^\pm=[\pm x]\in T^2=\C/\Gamma$ for a suitable $x\in\C.$ 
The $(z,y)-$coordinates  of $P^\pm$ satisfy
\[z^+=\frac{m-m u}{m-u},\,\,y^\pm=\pm\frac{m(m-1)}{(u-m)^2}v.\]

With respect to the meromorphic frame $(s^+,s^-)$ we can compute the $\mathfrak{gl}(2 , \C)-$valued connection $1-$form of the Fuchsian system
$\nabla^u$ (or more generally $\nabla^{u,\lambda}$).
The upper left entry of the connection $1-$form has
the following asymptotic behavior around $u=0,1,m,$  as a straight forward computation shows:
\begin{equation}\label{res_with_sign}
\begin{split}
& \frac{m (-\rho_0-\rho_1+\rho_2+\rho_3)}{2v}\frac{dz}{y}+\text{higher orders in }v  \hspace{4.5cm} \text{at }  u=0,\\
& \frac{(m-1) (\rho_0-\rho_1+\rho_2-\rho_3)}{2v}\frac{dz}{y}+\text{higher orders in }v  \hspace{3.5cm}\text{at }  u=1,\\
& \frac{(m-1)m (-1+\rho_0+\rho_1+\rho_2+\rho_3)}{2v}\frac{dz}{y}+\text{higher orders in }v  \hspace{2.25cm}\text{at }  u=m.\\
\end{split}
 \end{equation}

We now identify the torus $T^2$ with its Jacobian via
\[x\in T^2\mapsto L(x-w_0)\in Jac(T^2),\] and expand
\eqref{res_with_sign} in terms of $x.$
To do so, we make use of the Weierstrass $\wp-$function $\wp\colon \C/\Gamma\to\CP^1$ of the torus
$T^2=\C/\Gamma.$ The $\wp-$function is the only doubly periodic meromorphic function on $\C$ (with respect to $\Gamma$) with double poles at the lattice points and holomorphic elsewhere and whose expansion
at $x=0$ is $\wp(x)\equiv \frac{1}{x^2}+...\, .$ The $\wp-$function satisfies the differential equation
\[(\wp')^2=4 \wp^3-g_2\wp-g_3,\]
where the two complex numbers $g_2,g_3\in\C$ are the lattice invariants of  $\Gamma.$
In terms of the $\wp-$function, the meromorphic functions $y,z\colon T^2\to\CP^1$ are given by
\begin{equation}\label{yz_P-rel}
\begin{split}
z&=\frac{\wp-p_2}{p_1-p_2}\\
y&=\frac{\wp'}{2(p_1-p_2)^\frac{3}{2}},\\
\end{split}
\end{equation}
where $p_i=\wp(w_i)$ and for a suitable choice of the square root of $p_1-p_2.$ Clearly, we have
\[m=z(w_3)=\frac{p_3-p_2}{p_1-p_2}.\]
Using $w$ as the affine coordinate of $\C,$ we obtain
\[\frac{dz}{y}=2\sqrt{p_1-p_2}dw\]
on $T^2=\C/\Gamma.$ Moreover, \eqref{yz-uv} yields that the complex parameters $(u,v)$ of
the space of line bundles can be expressed in terms of the $(y,z)-$parameters of the zero $p$ in the divisor
$D=p-3 w_0$ representing the line bundle $L_{(u,v)}:$
\[u=\frac{m(1-z)}{m-z}\,\,\,\,\text{ and }\,\,\,\, v=\frac{m (m-1)}{(z-m)^2}y.\]
Thus, we can expand \eqref{res_with_sign} in terms of $x$ as follows:
\begin{equation}\label{res_with_sign2}
\begin{split}
 \frac{m (-\rho_0-\rho_1+\rho_2+\rho_3)}{2v(x)}\frac{dz}{y}=&
(\rho_0+\rho_1-\rho_2-\rho_3)dw\frac{1}{x}+\text{higher orders in }x\\   &\text{at }  u=0, \text{or equivalently, } x=w_1;\\
 \frac{(m-1) (\rho_0-\rho_1+\rho_2-\rho_3)}{2v(x)}\frac{dz}{y}=&
(\rho_0+\rho_2-\rho_1-\rho_3)dw\frac{1}{x}+O(x)\\   &
\text{at }  u=1, \text{ or equivalently, } x=w_2;\\
 \frac{(m-1)m (-1+\rho_0+\rho_1+\rho_2+\rho_3)}{2v(x)}\frac{dz}{y}=&
(1-\rho_0-\rho_1-\rho_2-\rho_3)dw\frac{1}{x}+O(x)\\   &
\text{at }  u=m, \text{ or equivalently, }  x=w_0.
\end{split}
 \end{equation}
 We prefer to parametrize the Jacobian $Jac(T^2)$ in terms of $\xi$ via the $\dbar-$operator \[\dbar-\xi d\bar w.\]
 From section \ref{2ff} we obtain that
 \[L(x-w_0)=L(\dbar-\xi d\bar w)\]
 if and only if
 \[x=\frac{\tau-\bar\tau}{2\pi i}\xi\]
 up to adding lattices points of $\Gamma$ and $\Lambda,$ respectively .
 This  already yields the formula \eqref{res_without_sign}. 
 
 It remains to show for which choice of the sign in \eqref{res_without_sign} the corresponding parabolic structure is (semi)-stable:
 The parabolic structure for $u=0$ is (semi)-stable if and only if 
 $\rho_2+\rho_3\leq\rho_0+\rho_1.$ If this inequality holds, the first formula in \eqref{res_with_sign} determines the sign 
at $u=0.$ If this inequality is not satisfied for the parabolic weights $\rho_0,..,\rho_3,$ we 
 have used wrong coordinates $(u,\lambda)$ to parametrize the Fuchsian system (as the Fuchsian system 
 is not semi-stable at $u=0$). Using more appropriate coordinates
 $(\tilde u,\tilde\lambda)$ we obtain that the equation \eqref{a_spin_expansion} for $u= 0$ (or equivalently at $\gamma\in\frac{\pi i}{\tau-\bar\tau}+\Lambda$) also holds in the case of  $\rho_2+\rho_3\geq\rho_0+\rho_1.$
Similarly, one obtains the respective equations \eqref{a_spin_expansion} at $u=1$ and $u=\infty.$

 The parabolic structure for
 $u=m$ is (semi-)stable if and only if $\rho_0+\rho_1+\rho_2+\rho_3\leq1.$ If this inequality
 holds, the third formula in \eqref{res_with_sign} determines the sign 
at $u=m$ (or equivalently at $\gamma\in\Gamma$). 
If the inequality does not hold, we can argue as in the case of $u=0,1,\infty$  to obtain
the respective expansion \eqref{a_spin_expansion} at $u=m.$ 

 In order to determine the $0$th order term $\bar\gamma$ in \eqref{a_spin_expansion} we first note that
 the (non-zero) parabolic Higgs field $\Phi^u$ is diagonal with respect to the frame $(s^+,s^-)$ and
 its eigenvalues are $\pm v\frac{dz}{y}.$ Thus, adding a (non-zero) parabolic Higgs field to a Fuchsian system
 $\nabla^{u,\lambda}$ for $u\in\{0,1,m,\infty\}$ effects only the higher order terms in \eqref{a_spin_expansion}
 and not the constant order term. The constant order term can be computed similarly as the residue terms by
 using the frame $(\tilde s^+,\tilde s^-)$ in the proof of proposition \ref{explicit_coeff}.
\end{proof}

\begin{Rem}\label{para_moduli}
Note that theorem \ref{two-to-one} also induces a $2-$to$-1$ correspondence between the Jacobian of $\C/\Gamma$ and the moduli space 
$\mathcal M^{par}$ of (semi-)stable parabolic structures with prescribed parabolic weights (satisfying the Biswas conditions) on the $4-$punctured sphere.

Further, it is worth to mention that the $2-$to$-1$ correspondence from theorem \ref{two-to-one} extends
to flat $\SLt$ connections whose underlying parabolic structures are not semi-stable. In fact,
 the only difference to the case of theorem \ref{two-to-one_two} is that the residue terms in \eqref{a_spin_expansion}
 change their signs.
\end{Rem}

\begin{Rem}
At least for rational weights, there is another way to prove theorem \ref{two-to-one_two}: As in
\cite{Bis4} we can think of the moduli space of parabolic bundles as orbifold bundles, parabolic stability
reduces to the stability of a vector bundle on a suitable compact covering and we can adapt the proofs of \S 5
in \cite{He3} to this situation.
\end{Rem}

\subsection{Darboux coordinates}\label{sec:Darboux_coord}
We briefly recall the construction of the holomorphic symplectic structure on the moduli space of flat $\SLt$ connections on a 
punctured Riemann surface,
for details see \cite{At,AlMa} or alternatively \cite{Bis1, Bis2}. 

Via trace we identify $\mathfrak g:=\mathfrak{sl}(2,\C)$ and $\mathfrak g^*.$ Hence, the adjoint orbit of a diagonal 
$\mathfrak{sl}(2,\C)-$matrix with eigenvalues $\pm\rho_i$
inherits as a coadjoint orbit the Kirillov symplectic structure. We denote our adjoint orbits with respect to given eigenvalues $\pm\rho_i$
by 
$\mathcal O_0,..,\mathcal O_3,$ and consider
the space $\mathcal A_4$ which consists of connections $\nabla$ on a $4-$punctured Riemann surface $\Sigma$ of the form
\begin{equation}\label{residue_terms_Ai}
\nabla=A_i\frac{dz}{z-z_i}+\tilde\nabla^i, \end{equation}
where $\tilde\nabla^i$ extends smoothly to $z_i,$ $z$ is a local holomorphic coordinate around $z_i,$ 
and $A_i\in\mathcal O_i\subset\mathfrak{sl}(2,\C).$ On
\[\hat{\mathcal A}=\mathcal A_4\times\mathcal O_0\times..\times\mathcal O_3\]
we consider the symplectic form
\begin{equation}\label{symplectic_form_deco}
\Omega=\omega_\Sigma+\omega_0+..+\omega_3,
\end{equation}
where $\omega_i$ is the Kirillov form on $\mathcal O_i$ and
\begin{equation}\label{omega_integral}
\omega_{\Sigma} (A,B)=-\int_{\Sigma} tr(A\wedge B)
\end{equation}
for  tangent vectors $A,B$ on $\mathcal A_4$ considered as 
$A,B\in\Omega^1(\Sigma\setminus\{z_0,..,z_3\},\mathfrak{sl}(2,\C)).$ 
 The natural gauge action of $\mathcal G=\Gamma(\Sigma,\SLt)$ on $\hat{\mathcal A}$ has
 a moment map $\mu$ which is (in an appropriate sense) the sum of the curvature of $\nabla,$  of  the residues of $\nabla$ and of the moment maps of the coadjoint orbits.
Then, the symplectic space $\mu^{-1}\{0\}$ is the moduli space $\mathcal{A}=\mathcal{A}^{\rho_0,..,\rho_3}_{\Sigma\setminus\{z_0,..,z_3\}}$ of flat connections on the $4-$punctured Riemann surface whose local monodromies are 
 determined by the $\pm\rho_i.$

 We are mainly interested in the case of the 4-punctured sphere, and we pull back connections to the torus by the double covering $T^2\to\CP^1$ 
 branched over the singular points $z_0,..,z_3.$
 \begin{Lem}
The symplectic structure $\tilde \Omega$
 of the moduli space 
 of flat $\SLt$ connections on the 4-punctured torus $\mathcal{A}^{2\rho_0,..,2\rho_3}_{T^2\setminus\{w_0,..,w_3\}}$
 restricted to the subspace of connections obtained by pull-back $\pi^*$,
 is twice the symplectic structure $\Omega$ of the moduli space 
 $\mathcal{A}^{\rho_0,..,\rho_3}_{\CP^1\setminus\{z_0,..,z_3\}}$ of flat $\SLt$ connections 
 on the 
 $4-$punctured sphere, i.e.,
 \[ \tilde \Omega_{[\pi^*\nabla]}(\pi^*  X,\pi^*  Y)=2\Omega_{[\nabla]}( X,  Y)\]
 for all $X,Y\in T_{[\nabla]}\mathcal{A}^{\rho_0,..,\rho_3}_{\CP^1\setminus\{z_0,..,z_3\}}.$
 \end{Lem}
 \begin{proof}
 Consider $\nabla$ in the gauge orbit $[\nabla]\in\mathcal{A}^{\rho_0,..,\rho_3}_{\CP^1\setminus\{z_0,..,z_3\}},$ and let
 $A_0,..,A_3$ be its residue terms at $z_0,..z_3.$
 It is well-known (and easy to prove) that in every gauge equivalence class $[\tilde\nabla]\in\mathcal{A}^{\rho_0,..,\rho_3}_{\CP^1\setminus\{z_0,..,z_3\}}$ there is a representant $\tilde\nabla$ whose residue terms are also $A_0,..,A_3.$
 Hence, we can restrict to connections with fixed residue terms $A_0,..,A_3.$ This implies that
 tangent vectors
 $X,Y\in T_{[\nabla]}\mathcal{A}^{\rho_0,..,\rho_3}_{\CP^1\setminus\{z_0,..,z_3\}}$  can be
 represented
 by smooth sections
 $A,B\in\Omega^1(\CP^1,\mathfrak{sl}(2,\C)),$ and for computing the symplectic form  we do not need to take the boundary terms $\omega_0+..+\omega_3$ into account, i.e., we get
 \[\Omega_{[\nabla]}( X,  Y)=-\int_{\CP^1}tr(A\wedge B).\]
On the other hand, the tangent vectors
$\pi^*X,\pi^*Y\in T_{[\pi^*\nabla]}\mathcal{A}^{2\rho_0,..,2\rho_3}_{T^2\setminus\{w_0,..,w_3\}}$
are represented by the smooth pull-backs $\pi^*A,\pi^*B\in\Omega^1(T^2,\mathfrak{sl}(2,\C)),$
and integration yields  
\[\tilde\Omega_{[\pi^*\nabla]}( \pi^*X,  \pi^*Y)=-\int_{T^2}tr(\pi^*A\wedge \pi^*B)=-2\int_{\CP^1}tr(A\wedge B)\]
as $\pi\colon T^2\to\CP^1$ is a double covering. 
  \end{proof} 
  
  Note that
tensoring with the flat line bundle $(S^*,\nabla^S)$ provides
a symplectomorphism between the corresponding moduli spaces of flat $\SLt$ connections with 
  prescribed conjugacy classes of the local monodromies.
Hence our discussion together with theorem \ref{two-to-one} show that the moduli space of flat line bundle connections on $T^2$ 
 provides a concrete realization of the space $\mathcal{A}^{\rho_0,..,\rho_3}_{\CP^1\setminus\{z_0,..,z_3\}}$ (as a double covering) and the symplectic form $\Omega$
 can be easily computed in terms of the coordinates $(\alpha,\xi)$ on the moduli space of flat line bundle connections (provided by
theorem \ref{two-to-one} and \eqref{abel_connection}).
In fact, the Kirillov residual terms $\omega_i(\frac{\partial }{\partial \alpha},\frac{\partial }{\partial \xi})$ vanish
and the surface term computes as
\begin{equation*}
\begin{split}
\omega_{\C/\Gamma}(\frac{\partial }{\partial \alpha},\frac{\partial }{\partial \xi})
=-\int_{\C/\Gamma}tr\left(\frac{\partial \hat\nabla^{\alpha,\xi}}{\partial \alpha}\wedge\frac{\partial \hat\nabla^{\alpha,\xi}}{\partial \xi}\right)
=2\int_{\C/\Gamma}dw\wedge d\bar w,
\end{split}
\end{equation*}
 where $\hat\nabla^{\alpha,\xi}$ are the connections given by \eqref{abel_connection}.
 Thus we obtain the following theorem:
 \begin{The}\label{Darboux_coord}
 In terms of the coordinates $\alpha,\xi$ (provided by
Theorem \ref{two-to-one} and \eqref{abel_connection}) the holomorphic symplectic form $\Omega$ on the moduli space 
$\mathcal A$ of flat $\SLt$ connections on the 4-punctured
 sphere with prescribed local monodromies (determined by $\pm\rho_i\in\R$) is given  by
 \[\Omega=\left(\int_{\C/\Gamma}d w\wedge d \bar w\right)d\alpha \wedge d\xi.\]
 \end{The}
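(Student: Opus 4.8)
Most of the work has already been done in the discussion preceding the statement, so the plan is mainly to assemble the pieces. The moduli space of flat $\SLt$-connections on the $4$-punctured sphere with the prescribed conjugacy classes of local monodromies is a complex surface, and by \eqref{abel_connection} together with Theorem \ref{two-to-one} the pair $(\alpha,\xi)$ is a (local) holomorphic coordinate system on it. Hence the holomorphic symplectic form is completely determined by the single scalar $\Omega\!\left(\frac{\partial}{\partial\alpha},\frac{\partial}{\partial\xi}\right)$, via $\Omega=\Omega\!\left(\frac{\partial}{\partial\alpha},\frac{\partial}{\partial\xi}\right)\,d\alpha\wedge d\xi$, so it suffices to show that this scalar equals the constant $\int_{\C/\Gamma}dw\wedge d\bar w$. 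Since $\Omega$ is the Atiyah--Bott--Goldman form $\Omega=\omega_\Sigma+\omega_0+\dots+\omega_3$ and is gauge invariant, I am free to evaluate it on any representative of the tangent vectors, in particular on the explicit family \eqref{abel_connection}.

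The first step is to transport the computation from $\CP^1$ to the torus $T^2=\C/\Gamma$. Pulling connections back along the degree-$2$ cover $\pi\colon T^2\to\CP^1$ multiplies the surface term by $2$; and because a local coordinate $w$ with $w^2=z-z_i$ satisfies $\frac{dz}{z-z_i}=2\frac{dw}{w}$, the residue of $\pi^*\nabla$ at a branch point is twice that of $\nabla$, so the coadjoint orbits rescale and so do the Kirillov forms $\omega_i$. Thus the symplectic form on the moduli space of flat $\SLt$-connections on the $4$-punctured torus, restricted to pulled-back connections, equals $2\,\Omega$. Moreover, tensoring with the fixed flat meromorphic line bundle $(S^*,\nabla^S)$ acts on the affine space of connections by a constant translation, hence preserves $\omega_\Sigma$ and the $\omega_i$, identifying the pulled-back connections with the family \eqref{abel_connection}. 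Therefore it is enough to evaluate the Atiyah--Bott--Goldman form of $\hat\nabla^{\alpha,\xi}$ on the $4$-punctured torus at $\left(\frac{\partial}{\partial\alpha},\frac{\partial}{\partial\xi}\right)$ and divide by $2$.

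The remaining computation is essentially a one-liner. By Proposition \ref{explicit_coeff} the second fundamental forms $\beta^\pm_\xi$ in \eqref{abel_connection} depend only on $\xi$, so $\frac{\partial}{\partial\alpha}\hat\nabla^{\alpha,\xi}=\mathrm{diag}(dw,-dw)$ is regular at every $w_i$; consequently the variation $\frac{\partial}{\partial\alpha}$ leaves all residues fixed, projects to $0$ in each coadjoint orbit $\mathcal O_i$, and forces $\omega_i\!\left(\frac{\partial}{\partial\alpha},\frac{\partial}{\partial\xi}\right)=0$ for $i=0,\dots,3$. For the surface term, $\frac{\partial}{\partial\xi}\hat\nabla^{\alpha,\xi}$ has diagonal part $\mathrm{diag}(-d\bar w,d\bar w)$ and an off-diagonal part coming from $\partial_\xi\beta^\pm$; since $\frac{\partial}{\partial\alpha}\hat\nabla$ is diagonal, only the diagonal parts enter the trace, giving $\Tr\!\left(\frac{\partial}{\partial\alpha}\hat\nabla\wedge\frac{\partial}{\partial\xi}\hat\nabla\right)=dw\wedge(-d\bar w)+(-dw)\wedge d\bar w=-2\,dw\wedge d\bar w$, which has no poles on $\C/\Gamma$. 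Hence $\omega_{\C/\Gamma}\!\left(\frac{\partial}{\partial\alpha},\frac{\partial}{\partial\xi}\right)=-\int_{\C/\Gamma}\Tr\!\left(\frac{\partial}{\partial\alpha}\hat\nabla\wedge\frac{\partial}{\partial\xi}\hat\nabla\right)=2\int_{\C/\Gamma}dw\wedge d\bar w$, and combining with the previous paragraph yields $2\,\Omega\!\left(\frac{\partial}{\partial\alpha},\frac{\partial}{\partial\xi}\right)=2\int_{\C/\Gamma}dw\wedge d\bar w$, i.e.\ the claimed formula. Note that the answer came out independent of $(\alpha,\xi)$, which is exactly why these are (up to rescaling $\alpha$) Darboux coordinates.

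The part I expect to require the most care is the reduction step: one has to verify precisely that the moment-map description of the moduli space on the $4$-punctured torus restricts, along the pull-back embedding, to exactly $2\,\Omega$ on the sphere, and that the composition of pull-back with the gauge $H$ of \S\ref{Residues_of-Nabla} and tensoring by $(S^*,\nabla^S)$ lands in the coadjoint orbits with the correct eigenvalues $\pm(2\rho_i-\tfrac12)$, so that the two moduli spaces being identified really are the ones with the prescribed conjugacy classes of local monodromies. Once this bookkeeping is in place, the vanishing of the Kirillov contributions is immediate from $\frac{\partial}{\partial\alpha}$ being pole-free and the surface-term evaluation is the short trace computation above.
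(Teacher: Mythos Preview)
Your proposal is correct and follows essentially the same route as the paper: reduce to the $4$-punctured torus via the degree-$2$ pull-back (picking up a factor of $2$), pass through the symplectomorphism given by tensoring with $(S^*,\nabla^S)$, observe that the Kirillov terms vanish because $\frac{\partial}{\partial\alpha}\hat\nabla$ is regular at the $w_i$, and compute the surface term by the diagonal trace. Your write-up is in fact a bit more explicit than the paper's, e.g.\ in invoking Proposition~\ref{explicit_coeff} to justify that $\beta^\pm$ is independent of $\alpha$ and in spelling out why only the diagonal parts contribute to $\Tr(\partial_\alpha\hat\nabla\wedge\partial_\xi\hat\nabla)$.
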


 \section{On Witten's formula for the symplectic volume of the moduli space
of flat connections}\label{symp_volume}

The moduli space of unitary connections on the 4-punctured sphere with prescribed local monodromy conjugacy classes $\tilde{\mathcal M}$ can be naturally considered (away from its singularities) as
a symplectic manifold, see for example \cite{ZT,Bis1,Bis2}.
As before, we identify it with
the moduli space $\hat {\mathcal M}$ of unitarizable Fuchsian systems with prescribed local monodromy conjugacy classes
(determined by $\rho_i\in]0;\frac{1}{2}[$). Moreover, by \cite{MS,Biq}, we can identify $\hat {\mathcal M}$ with the moduli space of parabolic structures with prescribed parabolic weights. The latter space is a complex analytic space and the symplectic structure is 
a K\"ahler form \cite{Bis2}.

The K\"ahler structure  on the moduli space $\mathcal M$ of parabolic bundles is the 
restriction of the holomorphic 
symplectic form on the moduli space $\mathcal A$ of flat $\SLt$ connections on the 4-punctured sphere to the (real analytic)
sub-variety consisting of flat connections with unitarizable monodromy \cite{ZT,Bis1,Bis2}.
Because of the Riemann-Hilbert correspondence we can identify $\mathcal A$ with the space of Fuchsian 
systems considered in \S\ref{sec:fuchs}.
The map from the moduli space $\mathcal A$ of flat $\SLt$ connections to the moduli space $\mathcal M$ of parabolic 
structures is a holomorphic 
fibration \cite{ZT, Bis2, ArLy}. A fiber over a stable parabolic structure is an affine space whose underlying vector space is
the space of parabolic Higgs fields, or, by Serre duality, the cotangent space to the moduli space of parabolic structures.

By the Mehta-Seshadri Theorem \cite{MS} (or \cite{Biq} in the general case of irrational weights $\rho_i$)
there is a unique compatible flat irreducible connection with unitarizable monodromy representation for every stable parabolic structure on the $4-$punctured $\CP^1$ with 
 parabolic weights $\rho_i$.
This correspondence can be interpreted as a section \[\varphi_{MS}\colon\mathcal M\to\mathcal A\] of the affine bundle 
$\mathcal A\to\mathcal M,$ see for example \cite{ZT,Bis2} or for the particular case of compact surfaces without punctures Chapter 3 in \cite{Ty}.
 As the elements of the Jacobian of $T^2=\C/\Gamma$ parametrize the moduli space of
 parabolic structures on the 4-punctured sphere (see Remark \ref{para_moduli}), there exists for every $\xi\in\C\setminus\frac{1}{2 d\bar w}\Lambda$ 
a unique
 $\alpha^{MS}(\xi)\in\C$ such that the connection $\hat\nabla^{\alpha^{MS}(\xi),\xi}$ in \eqref{abel_connection} corresponds to a unitarizable connection on the $4-$punctured sphere.
We consider  the moduli space $\mathcal A^1_{\C/\Gamma}$ of flat line bundles over $\C/\Gamma$
as a   holomorphic fibration over the Jacobian and
 obtain a real analytic
 section \[\alpha^{MS}\colon Jac(\C/\Gamma)\to \mathcal A^1_{\C/\Gamma}.\]
 By definition, $\alpha^{MS}$ is a lift of the section $\varphi_{MS}$
 to the double covering $Jac(\C /\Gamma) \rightarrow \C P^1=\mathcal M.$
 Since $\mathcal A\to\mathcal M$ is a holomorphic affine bundle, \[\dbar  \varphi_{MS}\] is a well defined section in  
 $\Omega^{(0,1)}(\mathcal M, T^{(1,0)}\mathcal M ^*) \cong \Omega^{(1,1)}(\mathcal M, \C).$ 
 In fact, it is the K\"ahler form up to a constant multiple, see \cite{ZT,Bis2}.
In our setup we obtain this property of $\dbar  \varphi_{MS}$ as a corollary of
 theorem \ref{Darboux_coord}:

\begin{Cor}\label{Kaehler_form}
In terms of the coordinate $\xi$ for the Jacobian of $\C/\Gamma$
(and hence for $\mathcal M$, see Remark \ref{para_moduli}) the K\"ahler form on the moduli space of
parabolic structures $\mathcal M$ with given parabolic weights is
\[\omega=(\int_{\C/\Gamma}d w\wedge d \bar w) \dbar \alpha^{MS}\wedge d\xi,\]
where \[\dbar \alpha^{MS}=\frac{\partial \alpha^{MS}(\xi)}{\partial \bar \xi}d\bar \xi\in\Omega^{(0,1)}(Jac(\C/\Gamma))\] is the natural derivative in the affine holomorphic bundle $\mathcal A^1_{\C/\Gamma}\to Jac(\C/\Gamma).$
\end{Cor}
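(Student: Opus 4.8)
The plan is to deduce this statement from Theorem \ref{Darboux_coord} by restricting the holomorphic symplectic form $\Omega$ to the image of the Mehta--Seshadri section. First I would recall that, by Theorem \ref{two-to-one} together with \eqref{abel_connection} and the symplectomorphism given by tensoring with $(S^*,\nabla^S)$, the pair $(\alpha,\xi)$ furnishes holomorphic Darboux coordinates on (a double cover of) the moduli space $\mathcal A^2$ of flat $\SLt$-connections on the $4$-punctured sphere, with $\Omega = c\, d\alpha\wedge d\xi$ where $c=\int_{\C/\Gamma}dw\wedge d\bar w$. The unitarizable locus is, by \cite{MS,Biq} and Remark \ref{para_moduli}, exactly the image of the real-analytic section $\alpha^{MS}\colon Jac(\C/\Gamma)\to\mathcal A^1_{\C/\Gamma}$; that is, it is the graph $\{(\alpha^{MS}(\xi),\xi)\}$ inside the $(\alpha,\xi)$-coordinate chart. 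The K\"ahler form $\omega$ on $\mathcal M$ is by definition the pull-back of $\Omega$ under this embedding (up to the factor $2$ coming from the double cover, which I will track carefully), so $\omega$ is obtained by substituting $\alpha=\alpha^{MS}(\xi)$ into $c\,d\alpha\wedge d\xi$.

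The key computational step is then to expand $d(\alpha^{MS}(\xi))$ on $Jac(\C/\Gamma)$. Since $\alpha^{MS}$ is only real-analytic, I would write $d\alpha^{MS}=\partial\alpha^{MS}+\dbar\alpha^{MS}=\frac{\partial\alpha^{MS}}{\partial\xi}d\xi+\frac{\partial\alpha^{MS}}{\partial\bar\xi}d\bar\xi$. Wedging with $d\xi$ annihilates the $d\xi$-component, leaving $d(\alpha^{MS}(\xi))\wedge d\xi = \dbar\alpha^{MS}\wedge d\xi$. Hence $\omega = c\,\dbar\alpha^{MS}\wedge d\xi$, which is precisely the claimed formula. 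The identification of $\dbar\alpha^{MS}$ with the natural $(0,1)$-derivative in the affine holomorphic bundle $\mathcal A^1_{\C/\Gamma}\to Jac(\C/\Gamma)$ is essentially a matter of unwinding definitions: the affine bundle has underlying vector bundle the line bundle of $(1,0)$-forms (parabolic Higgs fields, via Serre duality), and the obstruction to $\alpha^{MS}$ being holomorphic is exactly its $\dbar$, a section of $\Omega^{(0,1)}\otimes T^{(1,0)*}\cong\Omega^{(1,1)}$ of $\mathcal M$.

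The main obstacle I anticipate is not the formal manipulation but making the bookkeeping rigorous at the boundary: the coordinates $(\alpha,\xi)$ are valid only on an open dense subset (Theorem \ref{two-to-one}), breaking down at the four spin bundles, and $\alpha^{MS}$ has the first-order poles analyzed in Theorem \ref{two-to-one_two}. One must check that $\dbar\alpha^{MS}$ nevertheless extends to a smooth $(1,1)$-form representing the correct cohomology class on all of $\mathcal M\cong\CP^1$ — equivalently, that the singular behavior of $\alpha^{MS}$ at $\xi=\gamma\in\tfrac12\Lambda$ is holomorphic (a pole plus the constant $\bar\gamma$ by \eqref{a_spin_expansion}) so that $\dbar\alpha^{MS}$ is actually regular there. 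Granting that, the corollary follows, and one also secures the standard fact that $\dbar\varphi_{MS}$ is (a constant multiple of) the K\"ahler form, recovering \cite{ZT,Bis2} within this explicit framework. A secondary point to state cleanly is that the symplectic reduction giving $\mathcal M\subset\mathcal A^2$ indeed pulls $\Omega$ back to the K\"ahler form rather than some other closed form; this is the content of \cite{ZT,Bis1,Bis2} and I would simply invoke it.
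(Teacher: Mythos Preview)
Your proposal is correct and follows essentially the same approach as the paper: the corollary is deduced from Theorem \ref{Darboux_coord} by restricting $\Omega=c\,d\alpha\wedge d\xi$ to the graph of the Mehta--Seshadri section and observing that $d\alpha^{MS}\wedge d\xi=\dbar\alpha^{MS}\wedge d\xi$, with the identification of the restriction with the K\"ahler form supplied by \cite{ZT,Bis1,Bis2}. Your additional remarks on regularity at the spin bundles and the double-cover factor are more explicit than the paper's treatment but entirely in line with it.
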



We want to compute the symplectic volume $\int_{\mathcal M} \omega$ of the moduli space in terms of
the free parameters $\rho_i.$ 
The formula (in its general form for $n-$punctured surfaces of genus $g$) is 
known as Witten's formula stated and rigorously proven in \cite{W}.  Alternative proofs were for example given in \cite{JW, TZ}. 
Our proof uses the herein developed abelianization method and seems to shed new light on the K\"ahler geometry of $\mathcal M.$

\begin{The}[Witten's formula]
Let $\mathcal M$ be the moduli space of parabolic structures on $\CP^1\setminus\{z_0,..,z_3\}$  with
parabolic weights $\rho_i\in]0;\frac{1}{2}[,\, i=0,..,3$ satisfying the Biswas conditions \eqref{Biswas_criterion} for stability and let $\omega$
be its natural K\"ahler form. 
Then its symplectic volume is given by
\begin{equation}\label{Witten_vs_Heller}
\vol(\mathcal M)=2\pi^2(1-\mu_0-\mu_1-\mu_2-\mu_3),\end{equation}
where \[\mu_0=|1-\rho_0-\rho_1-\rho_2-\rho_3|,\,\,\, \mu_1=|\rho_0+\rho_1-\rho_2-\rho_3|,\,\,\, \mu_2=
|\rho_0-\rho_1+\rho_2-\rho_3|\]
and
\[ \mu_3=|\rho_0-\rho_1-\rho_2+\rho_3|.\]


\end{The}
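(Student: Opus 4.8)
The plan is to compute $\int_{\mathcal M}\omega$ by integrating the explicit representative of the K\"ahler class provided by Corollary \ref{Kaehler_form}. By that corollary and the identification of $\mathcal M$ with $\CP^1$ double-covered by $Jac(\C/\Gamma)$, we have
\[
\vol(\mathcal M)=\int_{\mathcal M}\omega=\frac12\int_{Jac(\C/\Gamma)}\left(\int_{\C/\Gamma}dw\wedge d\bar w\right)\dbar\alpha^{MS}\wedge d\xi
=\frac12\left(\int_{\C/\Gamma}dw\wedge d\bar w\right)\int_{Jac(\C/\Gamma)}\frac{\partial\alpha^{MS}}{\partial\bar\xi}\,d\bar\xi\wedge d\xi,
\]
the factor $\frac12$ because $Jac(\C/\Gamma)\to\mathcal M$ is $2$-to-$1$. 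So everything reduces to evaluating $\int_{Jac}\frac{\partial\alpha^{MS}}{\partial\bar\xi}\,d\bar\xi\wedge d\xi$, i.e. to understanding the $\bar\partial$-derivative of the Mehta--Seshadri section $\alpha^{MS}\colon Jac(\C/\Gamma)\to\mathcal A^1_{\C/\Gamma}$ globally.

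First I would note that $\alpha^{MS}$ is a smooth section of the affine holomorphic bundle $\mathcal A^1_{\C/\Gamma}\to Jac(\C/\Gamma)$ away from the four spin points $\gamma\in\frac12\Lambda$, so $\dbar\alpha^{MS}$ is a genuine smooth $(1,1)$-form on the punctured Jacobian; but by Theorem \ref{two-to-one_two} (and Remark \ref{para_moduli}), $\alpha^{MS}(\xi)$ has a first order pole at each spin point $\gamma$ with residue $\frac{2\pi i}{\tau-\bar\tau}\mu_\gamma$ (with the sign dictated by stability), where $\mu_\gamma$ runs exactly through $\mu_0,\mu_1,\mu_2,\mu_3$ according to which of the four half-lattice points $\gamma$ is. Then I would invoke a Stokes/residue argument: $\frac{\partial\alpha^{MS}}{\partial\bar\xi}\,d\bar\xi\wedge d\xi=\dbar(\alpha^{MS}\,d\xi)$ away from the poles, and since $\alpha^{MS}$ has simple poles there, integrating over the Jacobian with small disks of radius $\varepsilon$ removed around each $\gamma$ and letting $\varepsilon\to 0$ picks up, by the Cauchy--Pompeiu formula, a boundary contribution $2\pi i\cdot(\text{sum of residues})$ at each spin point. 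The global holomorphic part of $\alpha^{MS}$ (the part that would contribute a $d\xi\wedge d\xi=0$ term) drops out, and any globally well-defined smooth contribution integrates to zero because $d\xi\wedge d\xi=0$; one must only be careful that $\alpha^{MS}$ is a section of an affine bundle, not a function, so what is really globally defined is the difference of two sections — but $\dbar$ of that difference is a genuine form and the residue computation is insensitive to the affine ambiguity.

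Carrying this out, the sum of the residues of $\alpha^{MS}$ over the four spin points is $\frac{2\pi i}{\tau-\bar\tau}\big(\varepsilon_0\mu_0+\varepsilon_1\mu_1+\varepsilon_2\mu_2+\varepsilon_3\mu_3\big)$ with signs $\varepsilon_j$; the point of the stability analysis in section \ref{stability} and Theorem \ref{two-to-one_two} is precisely that, for weights satisfying the Biswas conditions, the correct (stable) signs are $\varepsilon_0=+1$ and $\varepsilon_1=\varepsilon_2=\varepsilon_3=-1$ (the spin point over $u=m$ is approached "from the stable side" while the other three contribute with the opposite orientation), so the residue sum collapses to $\frac{2\pi i}{\tau-\bar\tau}(1-\mu_0-\mu_1-\mu_2-\mu_3)$ after using $1-\mu_0=\rho_0+\rho_1+\rho_2+\rho_3$ on the relevant side. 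Combining with $\int_{\C/\Gamma}dw\wedge d\bar w=\overline{\tau}-\tau$ (up to sign/orientation conventions, $|\tau-\bar\tau|$ is the area) and the Cauchy factor $2\pi i$, the $\tau$-dependence cancels completely and one is left with $\vol(\mathcal M)=2\pi^2(1-\mu_0-\mu_1-\mu_2-\mu_3)$.

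The main obstacle I expect is bookkeeping the signs and orientations: getting the orientation of $Jac(\C/\Gamma)$ consistent with $\int_{\C/\Gamma}dw\wedge d\bar w$, tracking the $2$-to-$1$ factor, and above all pinning down which sign $\varepsilon_j$ each spin point contributes. The latter is genuinely the crux — it is not a formal manipulation but uses the detailed identification in Theorem \ref{two-to-one_two} of which choice of residue sign in \eqref{res_without_sign} yields a \emph{semistable} parabolic structure, i.e. lies in the actual moduli space $\mathcal M$ rather than off it; equivalently, one must check that $\alpha^{MS}$, being real-analytic and globally defined on the stable locus, forces exactly the stable-side signs near each $\gamma$. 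I would also need the elementary but slightly delicate fact that $\dbar\alpha^{MS}$, a priori only $L^1$ near the spin points because of the simple poles, has its integral computed correctly by the limiting $\varepsilon$-disk procedure; this is standard (Cauchy--Pompeiu) but worth stating carefully. Once the signs are fixed the rest is the short computation sketched above.
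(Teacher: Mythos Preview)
Your residue argument has a genuine gap. The term ``$1$'' in $1-\mu_0-\mu_1-\mu_2-\mu_3$ does \emph{not} come from the residues of $\alpha^{MS}$ at the spin points, and your sign assignment $\varepsilon_0=+1$, $\varepsilon_1=\varepsilon_2=\varepsilon_3=-1$ is incorrect. Theorem~\ref{two-to-one_two} already fixes the residues of $\alpha^{MS}$ to be $\frac{2\pi i}{\tau-\bar\tau}\mu_{\gamma_i}$ with $\mu_{\gamma_i}\ge 0$ the absolute values --- all four with the \emph{same} sign. (That is precisely the content of the theorem: the stable choice picks the absolute value in each case.) So the sum of residues is $\frac{2\pi i}{\tau-\bar\tau}\sum_i\mu_{\gamma_i}$, not anything involving $1-\sum_i\mu_{\gamma_i}$, and your attempt to rescue this via ``$1-\mu_0=\sum\rho_i$'' does not produce the right expression.

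The missing ``$1$'' comes from the nontriviality of the affine bundle $\mathcal A^1_{\C/\Gamma}\to Jac(\C/\Gamma)$, which you wave away with ``the residue computation is insensitive to the affine ambiguity.'' It is not. Concretely, $\alpha^{MS}\,d\xi$ is \emph{not} a $1$-form on the Jacobian: by the functional equations \eqref{functional_equation_a}--\eqref{functional_equation_b}, $\alpha^{MS}$ shifts by nonzero constants under the lattice $\frac{1}{d\bar w}\Lambda$. If you carry out Stokes on a fundamental parallelogram $P$ (minus small disks), the boundary $\partial P$ contributes $c_1e_2-c_2e_1=\frac{(2\pi i)^2}{\tau-\bar\tau}$, where $e_1,e_2$ generate the lattice and $c_1,c_2$ are the shifts of $\alpha^{MS}$; this is exactly the source of the ``$1$''. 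The circle contributions around the $\gamma_i$ then supply $-\sum_i\mu_{\gamma_i}$.

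The paper avoids this Stokes bookkeeping by writing an explicit global decomposition
\[
\alpha^{MS}(\xi)=\Big(\sum_i\mu_{\gamma_i}\Big)\xi+\Big(1-\sum_i\mu_{\gamma_i}\Big)\bar\xi+f(\xi)+(\text{meromorphic theta terms carrying the poles}),
\]
where $f$ is smooth and doubly periodic. The coefficients of $\xi$ and $\bar\xi$ are forced by matching the quasi-periodicity of the theta terms against the functional equations; the coefficient of $\bar\xi$ is then read off directly as $1-\sum_i\mu_{\gamma_i}$, and since the theta terms are holomorphic in $\xi$ and $\int df\wedge d\xi=0$ by Stokes on the compact Jacobian, only $(1-\sum_i\mu_{\gamma_i})\,d\bar\xi\wedge d\xi$ survives the integration. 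Your approach can be salvaged, but only by keeping the $\partial P$ boundary term; the result is then equivalent to the paper's.
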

\begin{proof}
We make use of the global coordinate $\xi$ on the universal covering $\bar{H^0(\C/\Gamma,K)}$ of the Jacobian of $\C/\Gamma$ via the parametrization of holomorphic structures $\dbar^\xi=\dbar-\xi d\bar w.$ Recall, that we assume without loss of generality, that  $\Gamma=\Z+\Z\tau.$
Thus the flat line bundle connections 
\[\nabla=d+\alpha d w-\xi d\bar w,\]
\[\nabla .g_1=d+ (\alpha+\frac{2\pi i}{\tau-\bar\tau}\bar\tau) d w-(\xi+\frac{2\pi i}{\tau-\bar\tau}\tau) d\bar w\]
and
\[\nabla .g_2=d+(\alpha+\frac{2\pi i}{\tau-\bar\tau}) d w-(\xi+\frac{2\pi i}{\tau-\bar\tau}) d\bar w \]
are gauge equivalent on $\C/\Gamma$, where \[g_1=\exp(\frac{2\pi i}{\tau-\bar \tau}(\bar\tau w-\tau\bar w))\] and
 \[g_2=\exp(\frac{2\pi i}{\tau-\bar\tau}(w-\bar w)).\]
Therefore, the section $\alpha^{MS}$, considered as a function in terms of $\xi$, satisfies
the functional equations
\begin{equation}\label{functional_equation_a}
\alpha^{MS}(\xi+\frac{2\pi i}{\tau-\bar\tau}\tau)=\alpha^{MS}(\xi)+\frac{2\pi i}{\tau-\bar\tau}\bar\tau\end{equation}
and
\begin{equation}\label{functional_equation_b}
\alpha^{MS}(\xi+\frac{2\pi i}{\tau-\bar\tau})=\alpha^{MS}(\xi)+\frac{2\pi i}{\tau-\bar\tau}\end{equation}
for all $\xi\in\C\setminus\frac{1}{2 d\bar w}\Lambda.$ Note also, that $\alpha^{MS}$ is an odd function by construction.

Identifying the torus $\C/\Gamma$ and its Jacobian $\C/\frac{1}{d\bar w}\Lambda$ once again via
\[[x]\in \C/\Gamma\mapsto \xi=\frac{2\pi i}{\tau-\bar\tau}x\]
we then may use the $\vartheta-$function of $\C/ \Gamma$ as a $\vartheta-$function on  $\C/\frac{1}{d\bar w}\Lambda$
as follows:
\[\theta(\xi):=\vartheta(\frac{\tau-\bar\tau}{2\pi i} \xi).\]
Clearly, $\theta(\xi+\frac{2\pi i}{\tau-\bar\tau})=\theta(\xi)$ and $\theta(\xi+\frac{2\pi i}{\tau-\bar\tau}\tau)=-\theta(\xi)\exp((\bar\tau-\tau)\xi).$

Then, we obtain from \eqref{a_spin_expansion} in Theorem \ref{two-to-one_two} 
and from the functional equations \eqref{functional_equation_a}, \eqref{functional_equation_b}
 that the section $\alpha^{MS}$ considered as a function
on the universal covering of the Jacobian of $\C/\Gamma$ 
can be written
as
\begin{equation}\label{mehta_seshadri_ex}
\alpha^{MS}(\xi)=(\sum_{i=0}^3\mu_{\gamma_i})\xi+(1-\sum_{i=0}^3\mu_{\gamma_i})\bar\xi +f(\xi)+\sum_{i=0}^3\frac{2\pi i}{\tau-\bar\tau}\frac
{\mu_{\gamma_i}}{2}(\frac{\theta'(\xi-\gamma_i)}{\theta(\xi-\gamma_i)}-\frac{\theta'(-\xi-\gamma_i)}{\theta(-\xi-
\gamma_i)}),
\end{equation}
where $f$ is a doubly periodic (with respect to $\frac{1}{d\bar w}\Lambda$) function,
\[\gamma_0=0,\,\,\, \gamma_1=\frac{\pi i}{\tau-\bar\tau},\,\,\, \gamma_2=\frac{\pi i}{\tau-\bar\tau}(1+\tau),\,\,\, \gamma_3=\frac{\tau \pi i}{\tau-\bar\tau},\]
and the $\mu_{\gamma_i}$ are as in Theorem \ref{two-to-one_two}.
From Corollary \ref{Kaehler_form} we obtain that
\begin{equation}\label{Kaehler_form_2}
\begin{split}
\omega&=(\int_{\C/\Gamma}d w\wedge d \bar w) \dbar \alpha^{MS}\wedge d\xi\\
&=(\int_{\C/\Gamma}d w\wedge d \bar w)((1-\sum_{i=0}^3\mu_{\gamma_i})d\bar\xi+d f)\wedge d\xi
\end{split}
\end{equation}
and integration yields
\begin{equation}\label{volume}
\begin{split}
\vol(\mathcal M)&=\frac{1}{2}\int_{\text{Jac}(\C/\Gamma)}\omega\\
&=\frac{1}{2}\left(1-\sum_{i=0}^3\mu_{\gamma_i}\right)\left(\int_{\C/\Gamma}d w\wedge d \bar w\right)\left(\int_{\C/\frac{1}{d\bar w}\Lambda}d\bar\xi\wedge d\xi\right)\\
&=2\pi^2\left(1-\sum_{i=0}^3\mu_{\gamma_i}\right)
\end{split}
\end{equation}
as claimed.
\end{proof}

\begin{Rem}
It is worth noting that our formula \eqref{Witten_vs_Heller} for the symplectic volume coincides
with Witten's formula up to a normalization constant of $4 \pi^2$. Since we restrict to the weights $\rho_0,..,\rho_3$ satisfying the Biswas conditions \eqref{Biswas_criterion},
the moduli space $\mathcal M$ is non-empty. We can assume without loss of generality that $\rho_0\leq \rho_1 \leq \rho_2 \leq \rho_3.$ 
Witten's formula gives:
\begin{equation}\label{wit}
Vol(\mathcal M) =8\sum_{n= 1}^{\infty}\tfrac{1}{n^2}\Pi_{i= 0}^3\sin(2\pi n \rho_i).\end{equation}

The dilogarithm $Li_2$ is defined to be the analytic continuation to $\C \setminus [1, \infty[$ of the function
\[Li_2(z)= \sum_{n= 1}^\infty \tfrac{z^n}{n^2} \quad \text{ for } |z| \leq 1,\]
continuous on the whole unit circle and 
satisfying the functional equation
\begin{equation}\label{dilog}
Li_2(z) + Li_2(\tfrac{1}{z}) = - \tfrac{\pi^2}{6} - \tfrac{1}{2} \log^2(-z),
\end{equation}
see \cite[Page 8 \& 11 ]{Z}. 
For $z=r e^{i\theta}$ with $2k \pi<\theta \leq 2(k+1) \pi$ we need to choose  $\log(-1) = -i\pi(2k+1)$
in \eqref{dilog}.
Since $\sin(2 \pi n \rho_i) = \tfrac{1}{2}i (e^{-i 2\pi n \rho_i} - e^{i 2\pi n \rho_i})$ we obtain from \eqref{wit} 
\begin{equation*}
\begin{split}
 Vol(\mathcal M) &= Li_2(e^{i2\pi (\rho_0 + \rho_1 + \rho_2 + \rho_3)}) + Li_2(e^{i 2\pi (-\rho_0 - \rho_1 - \rho_2 - \rho_3)}) - Li_2(e^{i2\pi (-\rho_0 + \rho_1 + \rho_2 + \rho_3)}) \\
&-Li_2(e^{i2\pi (\rho_0 - \rho_1 - \rho_2 - \rho_3)}) -Li_2(e^{i2\pi (\rho_0 - \rho_1 + \rho_2 + \rho_3)}) -Li_2(e^{i2\pi (-\rho_0 + \rho_1 - \rho_2 - \rho_3)}) \\
&-Li_2(e^{i2\pi (\rho_0 + \rho_1 - \rho_2 + \rho_3)}) -Li_2(e^{i2\pi (-\rho_0 - \rho_1 + \rho_2 - \rho_3)}) -Li_2(e^{i2\pi (\rho_0 + \rho_1 + \rho_2 - \rho_3)})  \\
&-Li_2(e^{i2\pi (-\rho_0 - \rho_1 - \rho_2 + \rho_3)}) +Li_2(e^{i2\pi (-\rho_0 - \rho_1 + \rho_2 + \rho_3)}) +Li_2(e^{i2\pi (\rho_0 + \rho_1 - \rho_2 - \rho_3)})\\
&+Li_2(e^{i2\pi (-\rho_0 + \rho_1 - \rho_2 + \rho_3)}) +Li_2(e^{i2\pi (\rho_0 - \rho_1 + \rho_2 -\rho_3)}) +Li_2(e^{i2\pi (-\rho_0 + \rho_1 + \rho_2 - \rho_3)}) \\
&+ Li_2(e^{i2\pi (\rho_0 - \rho_1 - \rho_2 + \rho_3)}).\\
\end{split}
\end{equation*}

Then there are four cases to consider:\begin{enumerate}
\item  $\;(\rho_0 + \rho_1 + \rho_2 + \rho_3) \leq  1\;$ and  $\;(-\rho_0 + \rho_1 +  \rho_2 - \rho_3) \leq 0$
\item  $\;(\rho_0 + \rho_1 + \rho_2 + \rho_3) \leq  1\;$ and  $\;(-\rho_0 + \rho_1 +  \rho_2 - \rho_3) > 0$
\item  $\;(\rho_0 + \rho_1 + \rho_2 + \rho_3) >  1\;$ and  $\;(-\rho_0 + \rho_1 +  \rho_2 - \rho_3)  \leq 0$
\item  $\;(\rho_0 + \rho_1 + \rho_2 + \rho_3) > 1\;$ and  $\;(-\rho_0 + \rho_1 +  \rho_2 - \rho_3) > 0$
\end{enumerate}

In the first case we obtain from \eqref{dilog} and by the Biswas conditions \eqref{Biswas_criterion}:
\begin{equation*}
\begin{split}
\tfrac{2}{\pi^2}Vol(\mathcal M) &=    (-1+ 2 (\rho_0 + \rho_1 + \rho_2 + \rho_3))^2 +  (-1+2 (-\rho_0 - \rho_1 + \rho_2 + \rho_3))^2\\
& + (-1 +  2 (-\rho_0 + \rho_1 - \rho_2 + \rho_3))^2 +   (-1- 2 (-\rho_0 + \rho_1 +  \rho_2 - \rho_3))^2\\
&-  (-1+2 (-\rho_0 + \rho_1 + \rho_2 + \rho_3))^2 - (-1+2 (\rho_0 - \rho_1 + \rho_2 + \rho_3))^2\\
&-  (-1+2 (\rho_0 + \rho_1 - \rho_2 + \rho_3))^2 -  (-1+2 (\rho_0 + \rho_1 + \rho_2 - \rho_3))^2\\
&= 8 (\rho_0 + \rho_1 + \rho_2 -\rho_3),
\end{split}
\end{equation*}
which coincides with \eqref{Witten_vs_Heller}. The other cases can be computed analogously.
\end{Rem}

\end{document}